\theoremstyle{plain}
\newtheorem{theorem}{Theorem}
\newtheorem{corollary}[theorem]{Corollary}
\newtheorem{proposition}[theorem]{Proposition}
\newtheorem{definition}[theorem]{Definition}
\theoremstyle{remark}
\newtheorem{remark}{Remark}
\begin{document}

\renewcommand{\contentsname}{ }

\date{This version:  15.07.2010}

\title[Metric spaces with dilations]{Introduction to metric spaces with
dilations}

\author{Marius Buliga}

\address{Institute of Mathematics, Romanian Academy \\ 
P.O. BOX 1-764, RO 014700 \\ 
Bucure\c sti, Romania}
\email{Marius.Buliga@imar.ro}

\subjclass{51K10, 53C17, 53C23}

\begin{abstract}
This paper gives a short introduction into the metric theory of 
spaces with dilations. 
\end{abstract}

\maketitle

\tableofcontents

\section{Introduction}

Metric spaces with dilations were introduced in \cite{buligadil1} under the 
name of "dilatation structures", then studied in a series of papers 
\cite{buligadil2} \cite{buligasr} \cite{buligaultra}. Very recently, in 
\cite{selivanova}, \cite{selvodopis}, the same object has been named
"(quasi)metric space with dilations". In the mentioned papers the authors 
extend the results from  \cite{buligadil1} to quasimetric spaces. 
We shall keep here this double denomination dilatation structure - metric space 
with dilations. 

Topological spaces with dilations were studied for the first time to my
knowledge in the paper \cite{buliga1}. In the paper \cite{buligaemergent} 
it is proved that the algebraic properties of  spaces with dilations are 
not based on metric notions, but in fact they hold for uniform spaces. 
Thus the generalization of Selivanova and Vodopyanov is not surprising at all, 
because quasimetric spaces are uniform topological spaces and this is all we 
need in order to deduce these mentioned algebraic properties. 
Another line of generalization was proposed in \cite{groupoids}, where normed 
groupoids and specific deformations of those were introduced. A particular
case is that of a trivial normed groupoid with a deformation induced by a dilatation 
structure. 

Finally, in the paper \cite{buligachar} we introduced length metric spaces with
dilations (length dilatation structures) and proved that regular sub-riemannian
spaces can be seen as such length dilatation structures. In the case of length
metric spaces with dilations we have to work with length functionals and 
study the gamma-convergence, or variational convergence, of length functionals, 
thus generalizing results obtained by Buttazzo,  De Pascale,  and Fragal\`a in 
\cite{buttazzo1}, or Venturini \cite{venturini}. 

In this paper I give a short introduction into these subjects, which could
serve as a basis for understanding more specialized results. 

In my opinion spaces with dilations could become a topic of intense studies.
Indeed, many examples studied in analysis in metric spaces are in fact spaces
with dilations and it seems that this supplementary algebraic-geometric 
structure which was recently identified could be a valuable tool for developing  
differential calculus or geometric measure theory in such spaces. For the moment
this subject has not been explored in combination with measure theory (for 
example on metric measured spaces, or in relation with optimal transportation). 
But it seems reasonable to expect that new results await just around the corner. 

\section{Metric spaces, distances, norms}

\begin{definition}
A metric space $(X,d)$ is a set $X$ endowed with a distance function  
$d: X \times X \rightarrow [0,+\infty)$.  
In the metric space $(X,d)$, the distance between two points $x, y \in X$ is $d(x,y) \geq 0$. 
The distance $d$ satisfies the following axioms: 
\begin{enumerate}
 \item[(i)] $d(x,y) = 0$ if and only if $x=y$, 
  \item[(ii)] (symmetry) for any $x, y \in X$ $d(x,y) = d(y,x)$, 
  \item[(iii)] (triangle inequality) for any $x, y, z \in X$ $d(x, z) \leq d(x,y) + d(y,z)$. 
  \end{enumerate}
The ball of radius $r>0$ and center $x \in X$ is the set 
$$ B(x,r) \, = \, \left\{ y \in X \mbox{  :  }  d(x,y) < r \right\}  \quad . $$
Sometimes we shall use the notation $\displaystyle B_{d}(x,r)$ for the ball of center $x$ and radius 
$r$ with respect to the distance $d$, in order to emphasize the dependence on the distance $d$. 
Any  metric space $(X,d)$ is endowed with  the topology generated by balls.  The notations 
 $\bar{B}(x,r)$ and  
$\displaystyle \bar{B}_{d}(x,r)$ are used for the closed ball centered at $x$, with radius $r$. 

A pointed metric space $(X, x, d)$ is a metric space $(X,d)$ with a chosen point $x \in X$. 
\label{dmetricspace}
\end{definition}

The notion of a metric space is not very old: it has been introduced by Fr\' echet in the paper 
[Sur quelques points du calcul fonctionnel, {\it Rendic. Circ. Mat. Palermo} {\em 22} (1906), 1Ð-74].

\subsection{Metric spaces, normed groups and normed groupoids} An obvious example of a metric space is $\displaystyle \mathbb{R}^{n}$ endowed with an euclidean distance, that is with a distance function induced by an euclidean norm: 
$$d(x, y) \, = \, \| x - y \| \quad . $$
In fact any normed vector space can be seen as a metric space. In order to define a distance 
from a norm, in a normed vector space, we only need the norm function and the abelian group 
structure of the vector space. (Later in this paper, he multiplication by scalars will provide us with the first example of a metric space with dilations).   This leads us to the introduction of  normed groups.  Let us give, in increasing generality, the definition of 
a normed group, then the definition of a normed groupoid. 

\begin{definition}
A normed group $(G, \rho)$ is a pair formed by: 
\begin{enumerate}
\item[-]  a group $G$, with the operation 
$(x, y) \in G \times G \mapsto xy$, inverse denoted by $\displaystyle x \in G \mapsto x^{-1}$  and neutral element denoted by $e$, 
\item[-]   a norm function $\rho : G \rightarrow [0, +\infty)$, which satisfies the following axioms: 
\begin{enumerate}
\item[(i)]  $\rho(x) = 0$ if and only if $x = e$, 
\item[(ii)] (symmetry) for any $x \in G$ $\displaystyle \rho(x^{-1} ) = \rho(x)$, 
\item[(iii)] (sub-additivity) for any $x, y \in G$ $\rho(xy) \leq \rho(x) + \rho(y)$. 
\end{enumerate}
\end{enumerate} 
\label{dnormedgroup}
\end{definition}

\begin{proposition}
Any normed group $(G, \rho)$ can be seen as a metric space, with any of the distances 
$$d_{L}(x, y) \, = \, \rho(x^{-1} y)  \quad , \quad d_{R}(x, y) \, = \, \rho(x y^{-1}) \quad .$$
The function $\displaystyle d_{L}$ is {\em left}-invariant, i.e.  for any $x, y, z \in G$ we have 
$\displaystyle d_{L}(zx, zy) \, = \, d_{L}(x,y)$. Similarly $\displaystyle d_{R}$ is {\em right}-invariant, 
that is for any $x, y, z \in G$ we have 
$\displaystyle d_{R}(xz, yz) \, = \, d_{R}(x,y)$. 
\end{proposition}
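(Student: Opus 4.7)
The plan is to verify the three distance axioms in Definition \ref{dmetricspace} for $d_{L}$ directly from the three norm axioms (i)--(iii) in Definition \ref{dnormedgroup}, and then to establish left-invariance by a one-line cancellation in the group. The argument for $d_{R}$ is entirely parallel: replacing $x^{-1}y$ by $xy^{-1}$ throughout, the same norm axioms yield the metric axioms and right-invariance. So I would carry out the work only once, for $d_{L}$.

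For axiom (i), I would note that $d_{L}(x,y) = \rho(x^{-1}y) = 0$ iff $x^{-1}y = e$ (by norm axiom (i)), which in a group is equivalent to $x = y$. For symmetry, I would use $(x^{-1}y)^{-1} = y^{-1}x$ together with norm axiom (ii) to get $d_{L}(x,y) = \rho(x^{-1}y) = \rho(y^{-1}x) = d_{L}(y,x)$. The triangle inequality is the only step that really uses the combinatorics of the group: I would write the factorization
$$x^{-1} z \, = \, (x^{-1} y)(y^{-1} z)$$
(which needs only associativity, not commutativity) and then apply sub-additivity (iii) to obtain
$$d_{L}(x,z) \, = \, \rho(x^{-1}z) \, \leq \, \rho(x^{-1}y) + \rho(y^{-1}z) \, = \, d_{L}(x,y) + d_{L}(y,z).$$

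Left-invariance is then immediate by cancellation:
$$d_{L}(zx, zy) \, = \, \rho((zx)^{-1}(zy)) \, = \, \rho(x^{-1} z^{-1} z y) \, = \, \rho(x^{-1} y) \, = \, d_{L}(x,y),$$
and the analogous computation, using $(xz)(yz)^{-1} = x y^{-1}$, gives right-invariance of $d_{R}$.

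There is no real obstacle here; the proof is a mechanical check. The one point worth highlighting is that the factorization used for the triangle inequality holds in any group, so abelianness is not required; the whole proposition is equally valid for non-commutative $G$, which matters later since the motivating examples (Heisenberg-type groups underlying dilatation structures) are non-abelian.
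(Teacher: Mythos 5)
Your proof is correct and follows essentially the same route as the paper's: verifying the metric axioms for $d_{L}$ from the corresponding norm axioms, using the factorization $x^{-1}z = (x^{-1}y)(y^{-1}z)$ for the triangle inequality and the cancellation $(zx)^{-1}(zy) = x^{-1}y$ for left-invariance, with the $d_{R}$ case handled by symmetry. The only difference is that you spell out the individual verifications in slightly more detail than the paper does.
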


\begin{proof}
It suffices to give the proof for the distance $\displaystyle d_{L}$. 
Indeed, the first axiom of a distance is a consequence of the first axiom of a norm, the symmetry axiom for distances is a consequence of the symmetry axiom of the norm and the triangle inequality  comes from the group identity 
$$ x^{-1} z \, = \, \left( x^{-1} y \right) \left( y^{-1} z\right)$$
(which itself is a consequence of the associativity of the group operation and of the existence of inverse) 
and from the sub-additivity of the norm. 
The left-invariance of $\displaystyle d_{L}$ comes from the group identity $\displaystyle 
\left(zx\right)^{-1} \left(zy\right) = x^{-1} y$.  
\end{proof}

Groupoids are generalization of groups. A groupoid can be seen as a small category such that 
any arrow is invertible. Alternatively, if we look at the set of arrows of such a category, it is 
  a set with a partially defined binary operation and a unary operation (the inverse function), which satisfy  
several properties. A norm is then  a function defined on the set of arrows of a groupoid, with properties similar with the ones of a norm over a group.  This is the definition which we give further.

\begin{definition} 
A normed groupoid $(G, \rho)$ is a pair formed by: 
\begin{enumerate}
\item[-]  a groupoid  $G$, which is a set with two operations $\displaystyle inv: G \rightarrow 
G$,  $\displaystyle m: G^{(2)} \subset G \times
G \rightarrow G$, which satisfy a number of properties. With the notations 
$\displaystyle inv(a) = a^{-1}$, $\displaystyle m(a,b) = ab$, these properties
are: for any $a,b,c \in G$ 
\begin{enumerate}
\item[(i)] if $\displaystyle (a,b) \in G^{(2)}$ and 
$\displaystyle (b,c) \in G^{(2)}$ then $\displaystyle (a,bc) \in G^{(2)}$ and 
$\displaystyle (ab, c) \in G^{(2)}$ and we have $a(bc) = (ab)c$, 
\item[(ii)] $\displaystyle (a,a^{-1}) \in G^{(2)}$ and 
$\displaystyle (a^{-1},a) \in G^{(2)}$, 
\item[(iii)] if $\displaystyle (a,b) \in G^{(2)}$ then $\displaystyle a b b^{-1} = a$ and 
$\displaystyle a^{-1} a b = b$. 
\end{enumerate}
The set $X= Ob(G)$ is formed by all products 
$\displaystyle a^{-1} a$, $a \in G$. For any $a \in G$ we let $\alpha(a) = 
a^{-1} a$ and $\omega(a) = a a^{-1}$.
\item[-]   a norm function $d : G \rightarrow [0, +\infty)$ which satisfies the following axioms: 
\begin{enumerate}
\item[(i)] $d(g) = 0$ if and only if $g \in Ob(G)$, 
\item[(ii)] (symmetry) for any $g \in G$,  $d(g^{-1}) \, = \, d(g)$, 
\item[(iii)] (sub-additivity) for any $\displaystyle (g,h) \in G^{(2)}$,   
$d(gh) \, \leq \, d(g) + d(h)$,  
\end{enumerate}
\end{enumerate} 
\label{dnormedgroupoid}
\end{definition}

If $Ob(G)$ is a singleton then $G$ is just a group and the previous definition corresponds exactly to the definition \ref{dnormedgroup} of a normed group. As in the case of normed groups,  normed groupoids induce metric spaces too. 

\begin{proposition}
Let $(G,d)$ be  a normed groupoid  and  $x \in Ob(G)$.  Then the space $\displaystyle (\alpha^{-1}(x), 
d_{x})$ is a metric space, with the distance $\displaystyle d_{x}$ defined by: for any $g, h \in G$ with 
$\alpha(g) = \alpha(h) = x$ we have $\displaystyle d_{x}(g,h) \, = \, d( g h^{-1})$. 

Therefore a normed groupoid can be seen as a disjoint union of metric spaces 
\begin{equation}
G \, = \, \bigcup_{x \in Ob(G)} \alpha^{-1}(x) \quad , 
\label{disu}
\end{equation}
with the property that right translations in the groupoid are isometries, that is:  for any $u \in G$ the transformation 
$$\displaystyle R_{u} : \alpha^{-1}\left(\omega(u)\right) \rightarrow \alpha^{-1}\left(\alpha(u)\right) \quad , \quad R_{u} (g) \, = \, gu$$
has the property for any $\displaystyle g, h \in \alpha^{-1}\left(\omega(u)\right)$  
$$d_{\omega(u)} (g,h) \, = \, d_{\alpha(u)} ( R_{u}(g) , R_{u}(h)) \quad .$$
\label{pgroupoid}
\end{proposition}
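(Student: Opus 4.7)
The plan is to mimic the proof for normed groups (Proposition above), replacing group identities with their groupoid analogues, and to check carefully that all compositions involved lie in $G^{(2)}$. The construction splits naturally into two tasks: verifying the three metric axioms for $d_x$ on each fiber $\alpha^{-1}(x)$, and then establishing that $R_u$ is an isometry between the relevant fibers.

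First I would check that $d_x$ is well defined: given $g, h \in \alpha^{-1}(x)$, one has $\omega(h^{-1}) = h^{-1}h = \alpha(h) = x = \alpha(g)$, so $(g, h^{-1}) \in G^{(2)}$ and $g h^{-1}$ makes sense. The symmetry axiom is then immediate from the groupoid identity $(gh^{-1})^{-1} = h g^{-1}$ combined with axiom (ii) of the norm. For the triangle inequality, given $g, h, k \in \alpha^{-1}(x)$, I would check that $(gh^{-1}, hk^{-1}) \in G^{(2)}$ (using axiom (i) of the groupoid and composability at $\omega(h)$), obtain the key identity
\[ g k^{-1} \, = \, (g h^{-1})(h k^{-1}), \]
and apply sub-additivity of the norm to conclude $d_x(g,k) \leq d_x(g,h) + d_x(h,k)$.

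For the positivity/separation axiom, the reverse direction is easy: $g = h$ forces $gh^{-1} = h h^{-1} = \omega(h) \in Ob(G)$, so $d_x(g,h) = 0$ by axiom (i) of the norm. For the forward direction, $d_x(g,h) = 0$ gives $gh^{-1} \in Ob(G)$; then both endpoints of $gh^{-1}$ coincide, which implies $\omega(g) = \omega(h)$ and $gh^{-1} = \omega(g) = \omega(h)$. Multiplying on the right by $h$, using axiom (iii) of the groupoid definition (specifically $g h^{-1} h = g$, plus the derived fact $\omega(h) h = h$ obtained from applying axiom (iii) to the pair $(h, h^{-1})$), I get $g = \omega(h) h = h$, as required. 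The one subtle point here — and the step I expect to need the most care — is that all the manipulations with identities require checking composability; this is where the repeated use of axioms (i) and (ii) of the groupoid must be explicit.

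Finally, for the isometry property of $R_u$, I would first observe that if $g \in \alpha^{-1}(\omega(u))$ then $\alpha(gu) = \alpha(u)$, so $R_u$ indeed maps into $\alpha^{-1}(\alpha(u))$. Then for $g, h \in \alpha^{-1}(\omega(u))$, the computation
\[ R_u(g) \, R_u(h)^{-1} \, = \, (gu)(u^{-1} h^{-1}) \, = \, g \, (u u^{-1}) \, h^{-1} \, = \, g \, \omega(u) \, h^{-1} \, = \, g h^{-1}, \]
where the last equality uses $\alpha(g) = \omega(u) = \omega(\omega(u))$ together with the groupoid identity $g \, \omega(u) = g$, yields $d_{\alpha(u)}(R_u(g), R_u(h)) = d(gh^{-1}) = d_{\omega(u)}(g,h)$. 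The disjoint-union decomposition (\ref{disu}) is then just a restatement of the partition of $G$ by the fibers of $\alpha$.
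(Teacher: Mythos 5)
Your proof is correct and takes essentially the same route as the paper's: verify that $\alpha(g)=\alpha(h)$ makes $(g,h^{-1})\in G^{(2)}$, transplant the normed-group argument for the metric axioms, and reduce the isometry claim to the identity $R_{u}(g)\left(R_{u}(h)\right)^{-1}=gh^{-1}$. The paper's version is simply terser, leaving the composability checks and the separation argument implicit where you spell them out.
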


\begin{proof}
We begin by noticing that if $\alpha(g) = \alpha(h)$ then $\displaystyle (g, h^{-1}) \in G^{(2)}$, therefore 
the expression $\displaystyle g h^{-1}$ makes sense. The rest of the proof of the first part of the proposition is identical with the proof of the previous proposition. 

For the proof of the second part of the proposition remark first that $\displaystyle R_{u}$ is well defined and that 
$$R_{u}(g) \left( R_{u}(h)\right)^{-1} \, = \, g h^{-1} \quad .$$
Then we have:
$$d_{\alpha(u)} ( R_{u}(g) , R_{u}(h))  \, = \, d\left( R_{u}(g) \left( R_{u}(h)\right)^{-1}\right) \, = \, $$ 
$$\, = \, d(g h^{-1}) \, = \, d_{\omega(u)} (g,h) \quad .$$
\end{proof}

Therefore normed groupoids provide examples of (disjoint unions of) metric spaces. Are there metric 
spaces more general than these? No, in fact we have the following. 

\begin{proposition}
Any metric space can be constructed from a normed groupoid, as in proposition \ref{pgroupoid}. Precisely, let $(X,d)$ be a metric space and consider the {\em trivial groupoid} $G = X \times X$ with 
multiplication 
$$(x,y) (y,z) \, = \, (x,z)$$
and inverse $\displaystyle (x,y)^{-1} = (y,x)$. Then $(G,d)$ is a normed groupoid and moreover 
any component of the decomposition (\ref{disu}) of $G$  is isometric with $(X,d)$. 

Conversely, if $G = X \times X$ is the trivial groupoid associated to the set $X$ and $d$ is a norm on 
$G$ then $(X,d)$ is a metric space.
\label{enoughgen}
\end{proposition}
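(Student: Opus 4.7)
The plan is to interpret both directions as mere dictionary-translation between the axioms of Definition \ref{dnormedgroupoid} and those of Definition \ref{dmetricspace}, once the groupoid structure of $X \times X$ has been explicitly identified. The first task, therefore, is to read off from the formulas $(x,y)(y,z) = (x,z)$ and $(x,y)^{-1} = (y,x)$ what the associated objects, source, target and composable pairs look like. A direct computation gives $(x,y)^{-1}(x,y) = (y,x)(x,y) = (y,y)$ and $(x,y)(x,y)^{-1} = (x,x)$, hence $\alpha(x,y) = (y,y)$, $\omega(x,y) = (x,x)$, and $Ob(G)$ is the diagonal of $X \times X$, which we may identify with $X$ itself. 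The composable pairs are precisely $G^{(2)} = \{((x,y),(y',z)) : y = y'\}$, and a one-line check confirms that the three groupoid axioms (i)--(iii) hold trivially on $X \times X$.

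Having set this up, I would then transfer the metric $d$ to a function on $G$ by $d((x,y)) := d(x,y)$ and verify the three norm axioms one by one: axiom (i) is the statement $d(x,y) = 0 \iff x = y \iff (x,y) \in Ob(G)$; symmetry of $d$ on $X$ is exactly the groupoid symmetry $d((y,x)) = d((x,y)^{-1}) = d((x,y))$; and the triangle inequality, applied to an arbitrary composable pair, reads $d((x,y)(y,z)) = d(x,z) \le d(x,y) + d(y,z)$, which is sub-additivity. So $(G,d)$ is a normed groupoid. For the isometry with the components in the decomposition (\ref{disu}), fix $y \in X$ and observe that $\alpha^{-1}((y,y)) = \{(x,y) : x \in X\}$; the obvious bijection $\Phi_y : \alpha^{-1}((y,y)) \to X$, $\Phi_y(x,y) = x$, satisfies
$$d_{(y,y)}((x_1,y),(x_2,y)) \, = \, d((x_1,y)(x_2,y)^{-1}) \, = \, d((x_1,y)(y,x_2)) \, = \, d(x_1,x_2),$$
so $\Phi_y$ is an isometry.

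The converse is then immediate: given a norm $d$ on the trivial groupoid $X\times X$, define $\tilde d(x,y) := d((x,y))$ and check the three axioms of Definition \ref{dmetricspace} by running the same equivalences backwards, exactly as in the proof of the previous proposition. I do not expect a genuine obstacle here; the only subtle point is the clean identification of $Ob(G)$ and of the composability condition, because without these the correspondence between "sub-additivity over composable pairs" and the "unrestricted" triangle inequality on $X$ looks slightly mismatched --- but the trivial groupoid is precisely designed so that every triple $(x,y,z)$ produces a composable pair $((x,y),(y,z))$, and this is what makes the translation lossless.
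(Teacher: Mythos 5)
Your proposal is correct and follows essentially the same route as the paper: identify $Ob(G)$ with the diagonal, translate the three norm axioms on the trivial groupoid into the three distance axioms on $X$, and exhibit the isometry $\alpha^{-1}((y,y)) = X \times \{y\} \to X$, $(x,y) \mapsto x$, via the computation $d((x_1,y)(x_2,y)^{-1}) = d(x_1,x_2)$. The only difference is that you spell out the composable pairs $G^{(2)}$ and the groupoid axioms explicitly, which the paper leaves implicit.
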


\begin{proof}
We begin by noticing that $\alpha(x,y) = (y, y)$, $\omega(x,y) = (x,x)$, therefore 
$Ob(G) \, = \, \left\{ (x,x) \mbox{ : } x \in X\right\}$ can be identified with $X$ by the bijection 
$(x,x) \mapsto x$.  Moreover, for any $x  \in X$ we have 
$$\alpha^{-1}((x,x)) \, = \, X \times \left\{ x\right\} \quad . $$

Because $d: X\times X \rightarrow [0,+\infty)$ and $G = X \times X$ it follows that 
$d: G \rightarrow [0, +\infty)$. We have to check the properties of a norm over a groupoid. 
But these are straightforward. The statement (i) ($d(x,y) = 0$ if and only if $(x,y) \in Ob(G)$) is equivalent with $d(x,y) = 0$ if and only if $x = y$. The symmetry condition (ii) is just the symmetry of the distance: 
$d(x,y) = d(y,x)$. Finally the sub-additivity of $d$ seen as defined on the groupoid $G$ is equivalent with 
the triangle inequality: 
$$d((x,y)(y,z)) \, = \, d(x,z) \, \leq \, d(x,y) + d(y,z) \quad .$$
In conclusion $(G,d)$ is a normed groupoid if and only if $(X,d)$ is a metric space. 

For any $x  \in X$ the distance $\displaystyle d_{(x,x)}$ on the space 
$\displaystyle \alpha^{-1}((x,x)) $ has the expression: 
$$d_{(x,x)}( (u,x) , (v, x) ) \, = \, d((u,x) (v,x)^{-1}) \, = \, d((u,x)(x,v)) \, = \, d(u,v)$$
therefore the metric space $\displaystyle (\alpha^{-1}((x,x)), d_{(x,x)}) $ is isometric with 
$(X,d)$ by the isometry $(u,x) \mapsto u$, for any $u \in X$. 
\end{proof}

In conclusion normed groups give particular examples of metric spaces and metric spaces are particular examples of normed groupoids.  For this reason normed groups make good examples of metric spaces.  
It is also interesting to extend the theory of metric spaces to normed groupoids (other than trivial normed groupoids). This is done in \cite{groupoids}.

 \subsection{Gromov-Hausdorff distance}


For this subject see  \cite{burago} (Section 7.4),  \cite{gromov} (Chapter 3) and \cite{gromovgr}. We start 
the presentation  by a discussion about maps and microscopes.

Imagine that  the metric space $(X,d)$ represents some part of the world, like the collection of 
the cities in a country. We also need a injective function $Name  : X \rightarrow A$, 
which associated to any $x \in X$ the object $Name(x)$ which represents the name of the 
place $x$ (the set $A$ is a collection of names). (It  seems that  the function $Name$ is not 
really necessary for this process. Indeed, in this abstract mathematical 
description we use 
statements like "to $x \in X$ we associate $y \in Y$", so the letters $x, y$ are just 
generic names. In conclusion, in the following we may take $Name(x) = x$ without altering 
the discussion). 

The distance 
between two places called $Name(x)$ and $Name(y)$ is equal to $d(x,y)$.  
Suppose that we want 
to mathematically describe what is a map of the collection $(X,d, Name)$ in the 
metric space $(Y,d')$, at the scale $\varepsilon > 0$.  
For example $(Y, d')$ might represent a printed map of the region $(X,d, Name)$. 
For the moment 
we take $\varepsilon = 1$, meaning that we want to make a map of $(X,d, Name)$, 
at the scale 1:1, 
in $(Y,d')$.  

We might say that such a map of $(X,d, Name)$ in $(Y,d')$ is in fact a relation 
$\rho \subset X \times Y$. To the place named $Name(x)$ is associated the set of 
points $\left\{ y \in Y \mbox{ : } (x,y) \in \rho \right\}$. We then decorate our map $\rho$ with names 
by defining {\em a relation} $Name' \subset Y \times A$, given like this:  $(x,y) \in \rho$ if and only if 
$(y, Name(x)) \in Name'$. At this point we would like that the map  $\rho$  preserves 
the distances up to a precision $\mu$.

Let us simplify our notations concerning relations. For any relation  $\rho  \subset X \times Y$  we shall write $\rho(x) = y$ if $(x,y) \in \rho$. Therefore we may have $\rho(x) = y$ and $\rho(x) = y'$ with 
$y \not = y'$, if $(x,y) \in f$ and $(x,y') \in f$. 

The domain of the relation $\rho$ is the set  $dom \ \rho \,  \subset X$ such that for any $x \in \, dom \ \rho$ 
there is $y \in Y$ with $\rho(x) = y$. The image of $\rho$ is the set of $im \ \rho \  \subset Y$ such that for any $y \in \, im \ \rho$ there is $x \in X$ with $\rho(x) =  y$.  
By convention, when we write that a statement $R(f(x), f(y), ...)$ is true, we mean that $R(x',y', ...)$ is 
true for any choice of $x', y', ...$, such that $(x,x'), (y,y'), ... \in f$. 

When we make a map $\rho$ we are not really measuring the distances between all points in $X$, then 
consider a bijection from $X$ to $Y$. What we do is that first  we take, for a number $\mu > 0$, a collection $M \subset X$ of points in $X$ which is $\mu$-dense in $(X,d)$. 

\begin{definition}
A subset $M \subset X$ of a metric space $(X,d)$ is $\mu$-dense in $X$ if for any 
$u \in X$ there is $x \in M$ such that $d(x,u) \leq \mu$. 
\end{definition}

After measuring (or using other means to deduce) the distances $d(x', x")$ between 
all pairs of 
points in $M$ (we may have several values for the distance $d(x',x")$), 
we try to represent the collection 
of these distances  in $(Y,d')$.  Therefore we pick a subset $M' \subset Y$, which is  
$\mu$-dense in 
$(Y,d')$, maybe in order to spare the material of this expensive 1:1 map. 
Then we associate to any 
$x \in M$ one or several points $y \in M'$ such that for any two points 
$\displaystyle x_{1}, x_{2} \in M$ and for any choice of points $y_{1}, y_{2} \in M'$, in 
correspondence with  $\displaystyle x_{1}, x_{2}$ respectively, the distances 
$\displaystyle d(x_{1}, x_{2})$ and $\displaystyle d'(y_{1}, y_{2})$ differ by 
$\mu$ at most. The association to any point $x \in M$ of a point $y \in M'$ is the 
relation $\rho$, with 
domain $M$ and image $M'$. 

The infimum of all $\mu >0$ for which such a  map $\rho$ is possible represents the 
greatest precision of 
making a map of $(X,d)$ in $(Y, d')$. 

This infimum is in general not equal to zero.  We may treat symmetrically the metric spaces $(X,d)$ and 
$(Y,d')$ and ask for the infimum of all $\mu$ such that $(X,d)$ admits a map in $(Y,d')$ with precision 
$\mu$ and $(Y,d')$ admits a map in $(X,d)$ with precision $\mu$. This $\mu$ is called the Gromov-Hausdorff distance between the metric spaces $(X,d)$ and $(Y,d')$. This distance can be also infinite if 
for any $\mu$ we cannot have a map $\rho$ associated.

We shall use  also the following convenient notation: by $\mathcal{O}(\varepsilon)$ we mean a positive function such that $\displaystyle \lim_{\varepsilon \rightarrow 0} \mathcal{O}(\varepsilon) = 0$. 

The  definition of the  Gromov-Hausdorff distance for pointed metric spaces is the following. 

\begin{definition}
Let $\displaystyle (X_{i},  d_{i}, x_{i})$, $i=1,2$, be a pair of locally compact pointed metric spaces and $\mu > 0$. 
We shall say that $\mu$ is admissible if  there is a relation $\displaystyle \rho \subset X_{1} \times X_{2}$ such that 
\begin{enumerate}
\itemsep-3pt
\item[1.] $dom \ \rho$ is $\mu$-dense in $\displaystyle X_{1}$, 
\item[2.] $im \ \rho$ is $\mu$-dense in $\displaystyle X_{2}$, 
\item[3.] $\displaystyle (x_{1}, x_{2}) \in \rho$, 
\item[4.] for all $x,y \in \ dom \ \rho$ we have 
\begin{equation}
\mid d_{2}(\rho(x), \rho(y)) - d_{1}(x,y) \mid \ \leq \  \mu
\label{photoquasi}
\end{equation}
\end{enumerate}
The Gromov-Hausdorff distance  between $\displaystyle (X_{1}, x_{1}, d_{1})$ and $\displaystyle  (X_{2}, x_{2}, d_{2})$  is   the infimum of admissible numbers $\mu$. 
\label{defgh}
\end{definition}

As introduced in definition \ref{defgh}, the Gromov-Hausdorff  (GH) distance is not a true 
distance, because the GH distance between two isometric pointed metric spaces   
is equal to zero. In fact the  
 GH distance induces a distance on isometry classes of pointed metric spaces 
 (which are not far apart). 
(The isometry class  $\displaystyle [X,d_{X}, x]$  of the pointed metric space $\displaystyle (X,d_{X}, x)$,  is the class of spaces $\displaystyle (Y,d_{Y}, y)$ such 
that it exists an isometry  $f:X \rightarrow Y$ with the property $f(x)=y$. )

Indeed, if two pointed metric spaces are isometric then the Gromov-Hausdorff distance  
equals $0$. 
The converse is also true  in the class of compact (pointed) metric spaces  
\cite{gromov} (Proposition 3.6).

Moreover, if two  of the isometry classes  $[X,d_{X}, x]$, $[Y,d_{Y}, y]$, $[Z,d_{Z}, z]$ have 
(representants with) diameter at most equal to 3, then the triangle inequality is true. We  shall 
use this distance and the induced  convergence for isometry classes of the form $[X,d_{X}, x]$, 
with $diam \ X \ \leq 5/2$. 

\subsection{Metric profiles. Metric tangent space}

We shall denote by $CMS$ the set of isometry classes of pointed compact metric spaces. The distance on this set is the Gromov distance between (isometry classes of) pointed metric spaces and the topology 
is induced by this distance. 

To any locally compact metric  space we can associate a metric profile \cite{buliga3,buliga4}. 

\begin{definition}
\label{dmprof}
The metric profile associated to the locally metric space $(M,d)$ is  the assignment 
(for small enough $\varepsilon > 0$) 
\[(\varepsilon > 0 , \ x \in M) \ \mapsto \  \mathbb{P}^{m}(\varepsilon, x) = \left[\bar{B}(x,1), 
\frac{1}{\varepsilon} d, x\right] \in CMS\]
\end{definition}

We can define a notion of metric profile regardless to any distance. 

\begin{definition}
\label{dprofile}
A metric profile is a curve $\mathbb{P}:[0,a] \rightarrow CMS$ such that
\begin{enumerate}
\itemsep-3pt
\item[(a)] it is continuous at $0$,
\item[(b)]for any $b \in [0,a]$ and  $\varepsilon \in (0,1]$ we have 
\[d_{GH} (\mathbb{P}(\varepsilon b), \mathbb{P}^{m}_{d_{b}}(\varepsilon,x_{b}))  \  = \ O(\varepsilon)\]
\end{enumerate}
The function $\mathcal{O}(\varepsilon)$ may change with  $b$.
We used the notations
\[
\mathbb{P}(b) = [\bar{B}(x,1) ,d_{b}, x_{b}] \quad \mbox{  and } \quad 
\mathbb{P}^{m}_{d_{b}}(\varepsilon,x) = \left[\bar{B}(x,1),\frac{1}{\varepsilon}d_{b}, x_{b}\right] 
\]
The metric profile is nice if 
\[d_{GH}\left (\mathbb{P}(\varepsilon b), \mathbb{P}^{m}_{d_{b}}(\varepsilon,x)\right) =O(b \varepsilon) \]
\end{definition}

Imagine that $1/b$ represents the magnification on the scale of a microscope. We use the microscope to study a specimen. For each $b > 0$ the information that we get is the table of distances of the pointed metric space $\displaystyle (\bar{B}(x,1) ,d_{b}, x_{b})$. 

How can we know, just from the information given by the microscope, that the string of "images" that we have corresponds to a real specimen? The answer is that a reasonable check is the relation from point (b) of the definition of metric profiles \ref{dprofile}. 

Really, this point says that starting from any magnification $1/b$,  if we further select the ball 
$\displaystyle \bar{B}(x, \varepsilon)$ in the snapshot  $\displaystyle (\bar{B}(x,1) ,d_{b}, x_{b})$, then 
the metric space $\displaystyle (\bar{B}(x,1) ,\frac{1}{\varepsilon} d_{b}, x_{b})$ looks approximately the same as the snapshot $\displaystyle (\bar{B}(x,1) ,d_{b\varepsilon}, x_{b})$. That is: further magnification by $\varepsilon$ of 
the snapshot (taken with magnification) $b$ is roughly the  same as the snapshot $b \varepsilon$. This 
is of course true in a neighbourhood of the base point $\displaystyle x_{b}$. 

The point (a) from the Definition \ref{dprofile}Ê has no other justification than Proposition \ref{propmetcone} in next 
subsection. 

We rewrite  definition \ref{defgh} with more details, in order to clearly understand what is a metric profile.  For any $b \in (0,a]$ and for any $\mu > 0$ there is $\varepsilon(\mu, b) \in (0,1)$ such that for any $\varepsilon \in 
(0,\varepsilon(\mu,b))$ there exists a relation $\displaystyle \rho = \rho_{\varepsilon, b} \subset \bar{B}_{d_{b}}(x_{b}, \varepsilon) \times \bar{B}_{d_{b \varepsilon}}(x_{b \varepsilon}, 1)$ such that
\begin{enumerate}
\itemsep-3pt
\item[1.] $\displaystyle dom \ \rho_{\varepsilon, b}$ is $\mu$-dense in $\displaystyle \bar{B}_{d_{b}}(x_{b}, \varepsilon)$, 
\item[2.] $\displaystyle im \ \rho_{\varepsilon, b}$ is $\mu$-dense in $\displaystyle \bar{B}_{d_{b \varepsilon}}(x_{b \varepsilon}, 1)$, 
\item[3.] $\displaystyle (x_{b}, x_{b\varepsilon}) \in \rho_{\varepsilon, b}$, 
\item[4.] for all $\displaystyle x,y \in \ dom \ \rho_{\varepsilon, b}$ we have
\begin{equation}
\label{disest}
\left|\frac{1}{\varepsilon} d_{b}(x,y) 
- d_{b \varepsilon}\left(\rho_{\varepsilon, b}(x), \rho_{\varepsilon, b}(y)\right)\right|
\, \leq \,  \mu
\end{equation}
\end{enumerate}

In the microscope interpretation, if $\displaystyle (x,u) \in \rho_{\varepsilon, b}$ means that $x$ and $u$ represent the same "real" point in the specimen. 

Therefore a metric profile gives two types of information:
\begin{itemize}
\itemsep-3pt
\item
a distance estimate like (\ref{disest}) from point 4,  
\item
an "approximate shape" estimate, like in the points 1--3, where we see that two sets, namely the balls   $\displaystyle \bar{B}_{d_{b}}(x_{b}, \varepsilon)$ and  $\displaystyle \bar{B}_{d_{b \varepsilon}}(x_{b \varepsilon}, 1)$, are approximately isometric. 
\end{itemize}

The simplest metric profile is one with $\displaystyle (\bar{B}(x_{b}, 1), d_{b}, x_{b}) = (X, d_{b}, x)$. 
In this case we see that $\displaystyle \rho_{\varepsilon, b}$ 
is approximately an $\varepsilon$ dilatation with base point $x$.  

This observation leads us to a particular class of (pointed) metric spaces, 
namely the  metric cones. 

\begin{definition}
\label{defmetcone}
A metric cone $(X,d, x)$ is a locally compact metric space $(X,d)$, with a marked point $x \in X$ such 
that for any $a,b \in (0,1]$ we have 
\[\displaystyle \mathbb{P}^{m}(a,x)  =  \mathbb{P}^{m}(b,x)\] 
\end{definition}

Metric cones have dilatations. By this we mean the following

\begin{definition}
Let $(X,d, x)$ be a metric cone. For any $\varepsilon \in (0,1]$  a dilatation is a function $\displaystyle \delta^{x}_{\varepsilon}: \bar{B}(x,1) \rightarrow \bar{B}(x,\varepsilon)$ such that 
\begin{itemize}
\itemsep-3pt
\item
$\displaystyle \delta^{x}_{\varepsilon}(x) = x$, 
\item
for any $u,v \in X$ we have 
\[d\left(\delta^{x}_{\varepsilon}(u), \delta^{x}_{\varepsilon}(v)\right) =\varepsilon \, d(u,v) \]
\end{itemize}
\end{definition}

The existence of dilatations for metric cones comes from the definition \ref{defmetcone}. 
Indeed, dilatations are just  isometries from $\displaystyle (\bar{B}(x,1), d, x)$ to $ (\bar{B}, \frac{1}{a}d, x)$. 

Metric cones are good candidates for being tangent spaces in the metric sense. 

\begin{definition}
\label{defmetspace}
A (locally compact) metric space $(M,d)$ admits a (metric) tangent space in $x \in M$ if the associated metric profile  
$\varepsilon \mapsto \mathbb{P}^{m}(\varepsilon, x)$ (as in definition \ref{dmprof})  admits a prolongation by continuity in 
$\varepsilon = 0$, i.e if the following limit exists: 
\begin{equation}
\label{limmetspace}
[T_{x}M,d^{x}, x]  =  \lim_{\varepsilon \rightarrow 0} \mathbb{P}^{m}(\varepsilon, x)
\end{equation}
\end{definition}

The connection between metric cones, tangent spaces and metric profiles in the abstract sense is made by the following proposition. 

\begin{proposition}
\label{propmetcone}
The associated metric profile $\varepsilon \mapsto \mathbb{P}^{m}(\varepsilon, x)$ of a metric space $(M,d)$ for  a fixed $x \in M$ is a metric profile in the sense of the definition \ref{dprofile} if and only if the space 
$(M,d)$ admits a tangent space in $x$. 
In such a case the tangent space is a metric cone. 
\end{proposition}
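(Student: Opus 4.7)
The plan is to extract both directions from the observation that, for the concrete profile $\mathbb{P}(\varepsilon) := \mathbb{P}^{m}(\varepsilon, x)$, the ancillary microscope appearing in axiom (b) of Definition \ref{dprofile} reduces to the profile itself. Indeed, unwinding the definitions, the metric $d_b = \frac{1}{b} d$ lives on $\bar{B}_{d_b}(x,1) = \bar{B}_d(x,b)$, and its microscope image at scale $\varepsilon$ is the unit ball in $\frac{1}{\varepsilon} d_b = \frac{1}{b\varepsilon} d$; that is,
\[
\mathbb{P}^{m}_{d_b}(\varepsilon, x) \, = \, \left[\bar{B}_d(x, b\varepsilon), \tfrac{1}{b\varepsilon} d, x\right] \, = \, \mathbb{P}^{m}(b\varepsilon, x) \, = \, \mathbb{P}(\varepsilon b) .
\]
Hence axiom (b) is satisfied with GH distance identically zero, and $\varepsilon \mapsto \mathbb{P}^{m}(\varepsilon, x)$ is an abstract metric profile if and only if axiom (a) (continuity at $\varepsilon = 0$) holds. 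But the latter is, by Definition \ref{defmetspace}, precisely the existence of a metric tangent space at $x$.

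For the cone claim, assume $[T_xM, d^x, x] = \lim_{\varepsilon \to 0} \mathbb{P}^{m}(\varepsilon, x)$ exists and fix $a \in (0,1]$. The same computation, now with $\varepsilon$ playing the role of $b$ and $a$ the role of $\varepsilon$, shows that the microscope image at scale $a$ of the pointed space $\mathbb{P}^{m}(\varepsilon, x)$ equals $\mathbb{P}^{m}(a\varepsilon, x)$. Letting $\varepsilon \to 0$, the right-hand side converges to $[T_xM, d^x, x]$ by hypothesis, while the left-hand side converges to $\mathbb{P}^{m}_{d^x}(a, x)$, the microscope image at scale $a$ of the tangent space. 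Equating the two limits gives $\mathbb{P}^{m}_{d^x}(a, x) = [T_xM, d^x, x]$ for every $a \in (0,1]$, which is exactly the cone condition of Definition \ref{defmetcone}.

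The one step that requires real work, and hence the main obstacle, is the continuity of the microscope operation $[Y, \delta, y] \mapsto [\bar{B}_{\delta}(y, a), \frac{1}{a}\delta, y]$ under pointed Gromov--Hausdorff convergence. Starting from relations $\rho_\varepsilon$ witnessing $d_{GH}(\mathbb{P}^{m}(\varepsilon, x), [T_xM, d^x, x]) \leq \mu_\varepsilon \to 0$ in the sense of Definition \ref{defgh}, I would restrict each $\rho_\varepsilon$ to pairs whose first coordinate lies in the closed $a$-ball at the basepoint, use the distance estimate (\ref{photoquasi}) to check that the restriction remains $\mathcal{O}(\mu_\varepsilon)$-dense in the corresponding $a$-balls on both sides, and absorb the residual boundary effects by a harmless replacement of $a$ by $a - \mathcal{O}(\mu_\varepsilon)$ (permissible by the local compactness of $M$ and of $T_xM$). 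Once this continuity of the microscope under GH limits is secured, the exchange of limit and microscope in the previous paragraph is automatic, and both implications of the proposition fall out.
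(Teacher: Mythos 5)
Your proposal is correct and follows essentially the same route as the paper: the key identity $\mathbb{P}^{m}_{d_{b}}(\varepsilon,x)=\mathbb{P}^{m}(b\varepsilon,x)$ makes axiom (b) hold with zero error, reducing everything to axiom (a), and the cone property comes from equating $\lim_{\varepsilon\to 0}\mathbb{P}^{m}(a\varepsilon,x)$ computed two ways. The only difference is that you explicitly flag and sketch the continuity of the rescaling/restriction ("microscope") operation under pointed Gromov--Hausdorff convergence, a step the paper's proof uses silently when it writes $\bigl[\bar{B}(x,1),\frac{1}{a}d^{x},x\bigr]=\lim_{\varepsilon\to 0}\mathbb{P}^{m}(a\varepsilon,x)$.
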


\begin{proof} 
A tangent space  $[V,d_{v}, v]$ exists if and only if we have the limit from the relation (\ref{limmetspace}). 
In this case there exists a prolongation by continuity to $\varepsilon = 0$  of the  metric profile 
$\mathbb{P}^{m}(\cdot , x)$. 
The prolongation is a metric profile in the sense of definition \ref{dprofile}. 
Indeed, we have still to check the property (b). But this is trivial, because for any $\varepsilon, b >0$,  sufficiently small, we have 
\[
\mathbb{P}^{m}(\varepsilon b, x) =  \mathbb{P}^{m}_{d_{b}}(\varepsilon,x)
\]
where  $d_{b} = (1/b) d$ and 
$\mathbb{P}^{m}_{d_{b}}(\varepsilon,x) = [\bar{B}(x,1),\frac{1}{\varepsilon}d_{b}, x]$.

Finally, let us prove that the tangent space is a metric cone. For any $a \in (0,1]$ we have
\[
\left[\bar{B}(x,1), \frac{1}{a}d^{x}, x\right] = \lim_{\varepsilon \rightarrow 0} \mathbb{P}^{m}(a \varepsilon, x) \]
Therefore 
\[
\left[\bar{B}(x,1), \frac{1}{a}d^{x}, x\right]  = [T_{x}M,d^{x}, x] 
 \tag*{\qed}
\] 
\renewcommand{\qed}{}
\end{proof}

\subsection{Length in metric spaces}

For a detailed introduction into the subject see for example \cite{amb}, chapter
1. 

\begin{definition}
The {\bf (upper) dilatation of a map} $f: X \rightarrow Y$ between metric spaces,  
in a point $u \in Y$ is 
$$ Lip(f)(u) = \limsup_{\varepsilon \rightarrow 0} \  
\sup  \left\{ 
\frac{d_{Y}(f(v), f(w))}{d_{X}(v,w)} \ : \ v \not = w \ , \ v,w \in B(u,\varepsilon)
 \right\}$$
\end{definition}
In the particular case of a   derivable function 
$f: \mathbb{R} \rightarrow \mathbb{R}^{n}$ the upper dilatation is  
$\displaystyle Lip(f)(t)  =  \|\dot{f}(t)\|$. 

 A function  $f:(X, d) \rightarrow (Y, d')$  is Lipschitz if there is a positive 
 constant $C$ such that for any $x,y \in X$ we have 
 $\displaystyle d'(f(x),f(y)) \leq C \, d(x,y)$. The number $Lip(f)$ is the smallest 
 such positive constant. Then  for any $x \in X$ we have the obvious relation  
$\displaystyle Lip(f)(x) \ \leq \ Lip(f)$.

A curve is a continuous function $c: [a,b] \rightarrow X$. The image of a curve is 
called path. Length measures paths. Therefore length does not depends on the 
reparameterization of the path and it is additive with respect to concatenation of paths.

\begin{definition}
In a metric space $(X,d)$ there are several ways to define  the length: 
\begin{enumerate}
\item[(a)] The {\bf length 
of a  curve with $L^{1}$ upper dilatation} $c: [a,b] \rightarrow X$ is 
$$L(f) = \int_{a}^{b} Lip(c)(t) \mbox{ d}t$$
\item[(b)] The {\bf variation of a curve}  $c: [a,b] \rightarrow X$ is the quantity 
$ Var(c)  =$  
$$  =  \sup \left\{ \sum_{i=0}^{n} d(c(t_{i}), 
c(t_{i+1})) \ \mbox{ : }  a = t_{0} <  t_{1} < ... < t_{n} < t_{n+1} =  b
\right\}$$ 
\item[(c)] The {\bf length of the path} $A = c([a,b])$ is  the
one-dimensional Hausdorff measure of the path.:  
$$l(A) \ = \ \lim_{\delta \rightarrow 0}  
 \inf \left\{ \sum_{i \in I} diam \ E_{i}  \mbox{ : } diam \ E_{i} 
< \delta \ , \ \ A \subset \bigcup_{i \in I} E_{i} \right\} $$
\end{enumerate}
\label{deflenght}
\end{definition}

The definitions are not equivalent. For Lipschitz curves the first  two 
definitions agree. For simple Lipschitz  curves all definitions agree.

\begin{theorem}
For each Lipschitz curve $c: [a,b] \rightarrow X$, we have 
$\displaystyle L(c) \ = \ Var(c) \ \geq \ \mathcal{H}^{1}(c([a,b]))$.

 If $c$ is moreover injective then  $\displaystyle \mathcal{H}^{1}(c([a,b])) \  = 
 \ Var(f)$. 
\label{t411amb}
\end{theorem}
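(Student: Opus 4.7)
The plan is to establish the four inequalities $Var(c) \leq L(c)$, $L(c) \leq Var(c)$, $Var(c) \geq \mathcal{H}^{1}(c([a,b]))$, and, under injectivity, $\mathcal{H}^{1}(c([a,b])) \geq Var(c)$, in that order. For $Var(c) \leq L(c)$ I would prove the pointwise bound $d(c(s),c(t)) \leq \int_{s}^{t} Lip(c)(\tau)\, d\tau$ for $s \leq t$: fixing $s$, the real-valued function $\varphi(t) = d(c(s),c(t))$ is Lipschitz, and at any differentiability point $|\varphi'(t)| \leq \limsup_{h \to 0} d(c(t+h),c(t))/|h| \leq Lip(c)(t)$ by the triangle inequality and the very definition of upper dilatation; integrating and summing over an arbitrary partition gives the first inequality. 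For the reverse, I would introduce the variation function $V(t) = Var(c|_{[a,t]})$. From $d(c(v),c(w)) \leq |V(w) - V(v)|$ one gets the pointwise comparison $Lip(c)(t) \leq Lip(V)(t)$; since $V$ is a Lipschitz function on a real interval, $Lip(V)(t) = |V'(t)|$ a.e., and so $L(c) = \int_{a}^{b} Lip(c)\, dt \leq \int_{a}^{b} V'(t)\, dt = V(b) - V(a) = Var(c)$.

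For $Var(c) \geq \mathcal{H}^{1}(c([a,b]))$ I would fix $\delta > 0$ and exploit uniform continuity of $c$ on $[a,b]$ to choose a partition $a = t_{0} < \dots < t_{n+1} = b$ with $diam \, c([t_{i}, t_{i+1}]) < \delta$ for every $i$. Setting $E_{i} = c([t_{i}, t_{i+1}])$, the elementary estimate $diam(E_{i}) \leq Var(c|_{[t_{i}, t_{i+1}]})$ together with additivity of variation under concatenation yield $\sum_{i} diam(E_{i}) \leq Var(c)$. Since the $E_{i}$ cover $c([a,b])$ with each $diam(E_{i}) < \delta$, the infimum defining $\mathcal{H}^{1}$ is at most $Var(c)$ for every $\delta$; letting $\delta \to 0$ gives the inequality.

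For the injective case, given $\varepsilon > 0$ I would pick a partition with $\sum_{i} d(c(t_{i}), c(t_{i+1})) > Var(c) - \varepsilon$ and set $\gamma_{i} = c([t_{i}, t_{i+1}])$. Each $\gamma_{i}$ is connected, and the $1$-Lipschitz distance function $f_{i}(x) = d(x, c(t_{i}))$ sends $\gamma_{i}$ onto a connected subset of $[0,\infty)$ containing both $0$ and $d(c(t_{i}), c(t_{i+1}))$, hence containing the whole interval between them. Because $1$-Lipschitz maps do not increase $\mathcal{H}^{1}$, $\mathcal{H}^{1}(\gamma_{i}) \geq d(c(t_{i}), c(t_{i+1}))$; injectivity of $c$ forces consecutive $\gamma_{i}$ to meet only at a single endpoint and non-consecutive ones to be disjoint, so additivity of $\mathcal{H}^{1}$ yields $\mathcal{H}^{1}(c([a,b])) = \sum_{i} \mathcal{H}^{1}(\gamma_{i}) \geq Var(c) - \varepsilon$, and letting $\varepsilon \to 0$ finishes. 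The main obstacle is the a.e.\ identity $Lip(V) = |V'|$ invoked in the first paragraph: this is classical for real Lipschitz functions but genuinely uses Lebesgue differentiation; the remaining pieces are softer, resting on triangle-inequality estimates, uniform continuity on a compact interval, and the $1$-Lipschitz projection trick via distance functions — the latter being the reason the argument works in an arbitrary metric target, without any projection onto Euclidean lines.
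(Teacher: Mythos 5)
The paper itself gives no proof of this theorem: it defers to \cite{amb} and only remarks that the Reparametrisation Theorem is the key tool, so your attempt has to be judged on its own. Three of your four steps are sound and essentially the standard arguments: the bound $Var(c)\le L(c)$ via the auxiliary Lipschitz functions $\varphi(t)=d(c(s),c(t))$, the covering argument giving $Var(c)\ge \mathcal{H}^{1}(c([a,b]))$, and the injective case via the $1$-Lipschitz distance functions $f_{i}$ together with additivity of $\mathcal{H}^{1}$ over arcs meeting in finitely many points are all correct.

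The step $L(c)\le Var(c)$, however, contains a genuine gap, and it is exactly at the point you flagged as the main obstacle. You invoke ``$Lip(V)(t)=|V'(t)|$ a.e.'' as classical, but the classical fact concerns the \emph{pointwise} Lipschitz constant $\limsup_{s\to t}|V(s)-V(t)|/|s-t|$, which does equal $|V'(t)|$ a.e.; the quantity $Lip(V)(t)$ defined in this paper is the \emph{two-point} upper dilatation, a supremum over pairs $v,w$ both ranging in $B(t,\varepsilon)$, and this can be strictly larger on a set of positive measure. Concretely, let $A\subset[0,1]$ be open and dense with $|A|=1/2$ (the complement of a fat Cantor set) and $V(t)=|A\cap[0,t]|$. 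Then $V'=1_{A}$ a.e., so $\int_{0}^{1}|V'|\,dt=1/2$, but every ball $B(t,\varepsilon)$ contains an interval of $A$ on which $V$ has slope $1$, so $Lip(V)(t)=1$ for every $t$ and $\int_{0}^{1}Lip(V)\,dt=1$. Worse, viewing this same $V$ as a Lipschitz curve $c:[0,1]\rightarrow\mathbb{R}$ gives $L(c)=1>1/2=Var(c)$, so the equality $L(c)=Var(c)$ is actually false as literally stated with the paper's definition of upper dilatation: no argument can close your gap. The statement being quoted from \cite{amb} uses the metric derivative $md(c)(t)$ of definition \ref{defmd} in place of $Lip(c)(t)$, and for that quantity your strategy does go through, since $md(V)(t)=V'(t)$ a.e. is precisely the one-point statement that genuinely is classical.
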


An important tool used in the proof  of the previous theorem is 
the geometrically obvious, but not straightforward to prove in this generality, 
Reparametrisation Theorem. 

\begin{theorem}
Any Lipschitz curve  admits a reparametrisation $c:
[a,b] \rightarrow A$ such that $Lip(c)(t) = 1$ for almost any $t \in [a,b]$. 
\label{tp}
\end{theorem}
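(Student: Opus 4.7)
The plan is to use the classical arc-length reparametrisation. Let $\gamma : [\alpha, \beta] \to X$ be the given Lipschitz curve with $A = \gamma([\alpha,\beta])$, and write $L = \mathrm{Var}(\gamma)$, which is finite by the Lipschitz bound. Define the arc-length function $s : [\alpha, \beta] \to [0, L]$ by $s(t) = \mathrm{Var}(\gamma \big|_{[\alpha, t]})$. From the definition of variation, $s$ is non-decreasing, $s(\alpha) = 0$, $s(\beta) = L$, and for $t_1 < t_2$ we have the crucial two-sided estimate
\[
d(\gamma(t_1), \gamma(t_2)) \,\leq\, s(t_2) - s(t_1) \,\leq\, \mathrm{Lip}(\gamma)\, (t_2 - t_1).
\]
The right inequality shows $s$ is continuous (in fact Lipschitz), so $s([\alpha, \beta]) = [0, L]$. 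The left inequality shows that whenever $s(t_1) = s(t_2)$, one has $\gamma(t_1) = \gamma(t_2)$, so there is a well-defined map $c : [0, L] \to A$ characterised by $c \circ s = \gamma$. If $L = 0$ the curve is constant and any reparametrisation is trivial, so assume $L > 0$.

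Next I check that $c$ is $1$-Lipschitz. Given $s_1 < s_2$ in $[0, L]$, pick preimages $t_1 \leq t_2$ under $s$; then by the left inequality above,
\[
d(c(s_1), c(s_2)) \,=\, d(\gamma(t_1), \gamma(t_2)) \,\leq\, s(t_2) - s(t_1) \,=\, s_2 - s_1.
\]
In particular $\mathrm{Lip}(c)(s) \leq 1$ for every $s \in [0, L]$, and $c$ is a (continuous) reparametrisation of the original path.

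It remains to upgrade this to $\mathrm{Lip}(c)(s) = 1$ for almost every $s$. For this I invoke Theorem \ref{t411amb}: for Lipschitz curves, length equals variation, i.e.\ $L(c) = \mathrm{Var}(c)$. Since $c \circ s = \gamma$, the curves $\gamma$ and $c$ traverse the same ordered sequence of points (up to the constancy plateaus of $s$, which contribute nothing to variation), so $\mathrm{Var}(c) = \mathrm{Var}(\gamma) = L$. Combining,
\[
L \,=\, \mathrm{Var}(c) \,=\, L(c) \,=\, \int_0^L \mathrm{Lip}(c)(t)\, \mathrm{d}t,
\]
while $\mathrm{Lip}(c) \leq 1$ a.e.\ forces equality $\mathrm{Lip}(c)(t) = 1$ for a.e.\ $t \in [0, L]$.

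The main obstacle, in my view, is the bookkeeping around non-injectivity of $s$: one must verify that collapsing plateaus of $s$ produces a genuinely well-defined map on $[0, L]$ and that this collapsing does not destroy the variation identity $\mathrm{Var}(c) = \mathrm{Var}(\gamma)$. The two-sided sandwich $d(\gamma(t_1), \gamma(t_2)) \leq s(t_2) - s(t_1) \leq \mathrm{Lip}(\gamma)(t_2 - t_1)$ handles both issues simultaneously, and the final equality $\mathrm{Lip}(c) \equiv 1$ a.e.\ then drops out of the already-established $L = \mathrm{Var}$ identity via a one-line integral argument.
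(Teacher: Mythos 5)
The paper does not prove this theorem at all: it is quoted as an imported result (the ``Reparametrisation Theorem'' from \cite{amb}), stated only as a tool used in the proof of Theorem \ref{t411amb}. So there is nothing in the paper to compare against line by line; what you give is the standard arc-length reparametrisation argument, and it is essentially correct. The two-sided sandwich $d(\gamma(t_1),\gamma(t_2)) \leq s(t_2)-s(t_1) \leq Lip(\gamma)(t_2-t_1)$ does exactly the work you claim: well-definedness of $c$ on the plateaus of $s$, $1$-Lipschitzness of $c$, and (together with the partition argument, which you gesture at but could state more explicitly: every partition of $[\alpha,\beta]$ pushes forward under $s$ to a partition of $[0,L]$ with the same distance sums, and conversely every partition of $[0,L]$ lifts) the identity $\mathrm{Var}(c)=\mathrm{Var}(\gamma)=L$. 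The final step, $\int_0^L Lip(c)\,\mathrm{d}t = L$ with integrand $\leq 1$ forcing the integrand to equal $1$ a.e., is fine.

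The one point you should address is a potential circularity in the logical ordering: the paper explicitly presents Theorem \ref{tp} as an ingredient in the proof of Theorem \ref{t411amb}, yet your argument invokes Theorem \ref{t411amb} (the identity $L(c)=\mathrm{Var}(c)$) to finish. In the source \cite{amb} (and in Ambrosio--Tilli) the identity $L=\mathrm{Var}$ for Lipschitz curves is established independently of reparametrisation --- the reparametrisation theorem is needed only for the Hausdorff-measure statement about injective curves --- so the circularity is apparent rather than real, but as written your proof silently relies on that fact about the internal structure of the cited proof. Either say this explicitly, or close the loop directly: since $c$ is parametrised by arc length, $\mathrm{Var}(c|_{[s_1,s_2]}) = s_2-s_1$ for all $s_1<s_2$, and combining this with the (reparametrisation-free) identity $\mathrm{Var}(c|_{[s_1,s_2]})=\int_{s_1}^{s_2} Lip(c)(t)\,\mathrm{d}t$ on every subinterval gives $Lip(c)=1$ a.e.\ by Lebesgue differentiation, without ever citing the full Theorem \ref{t411amb}.
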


\begin{definition}
We shall denote by $l_{d}$ the {\bf length functional induced by the distance} $d$, 
defined only on the family of Lipschitz curves. 
  If the metric space $(X,d)$ is connected by Lipschitz curves, then the length 
induces a new distance $d_{l}$, given by: 
$$d_{l}(x,y) \  = \ \inf \ \left\{ l_{d}(c([a,b])) \mbox{ : } 
c: [a,b] \rightarrow X \ \mbox{ Lipschitz } , \right.$$
$$\left. \ c(a)=x \ , \ c(b) = y \right\}$$

A {\bf length metric space} is a metric space $(X,d)$, connected by Lipschitz curves, 
 such that $d  = d_{l}$. 
\label{dpath}
\end{definition}

From theorem \ref{t411amb} we deduce that Lipschitz curves in complete 
length metric spaces are absolutely continuous. 
Indeed, here is the definition of an absolutely continuous curve 
(definition 1.1.1, chapter 1,   \cite{amb}). 
 
 \begin{definition}
 Let $(X,d)$ be a complete metric space. A curve $c:(a,b)\rightarrow X$ is {\bf absolutely 
 continuous} if there exists $m\in L^{1}((a,b))$ such that for any $a<s\leq t<b$ we have 
 $$d(c(s),c(t)) \leq \int_{s}^{t} m(r) \mbox{ d}r   .$$
 Such a function $m$ is called a {\bf upper gradient} of the curve $c$. 
 \label{defac}
 \end{definition}
 
 According to theorem \ref{t411amb}, for a Lipschitz curve $c:[a,b]\rightarrow X$ in a 
 complete length metric space such a function 
 $m\in L^{1}((a,b))$  is the upper dilatation  $Lip(c)$. 
More can be said about the expression of the upper dilatation. We need first to introduce the notion of 
metric derivative of a Lipschitz curve. 

\begin{definition}
A curve $c:(a,b)\rightarrow X$ is {\bf metrically derivable} in $t\in(a,b)$ if the limit 
$$md(c)(t) = \lim_{s\rightarrow t} \frac{d(c(s),c(t))}{\mid s-t \mid}$$
exists and it is finite. In this case $md(c)(t)$ is called the {\bf metric
derivative} of $c$ in $t$. 
\label{defmd}
\end{definition}

For the proof of the following theorem see \cite{amb}, theorem 1.1.2, chapter 1. 

\begin{theorem}
Let $(X,d)$ be a complete metric space and $c:(a,b)\rightarrow X$ be an absolutely continuous curve. 
Then $c$ is metrically  derivable for $\mathcal{L}^{1}$-a.e. $t\in(a,b)$. Moreover the function $md(c)$ belongs to $L^{1}((a,b))$ and it is minimal in the following sense: $md(c)(t)\leq m(t)$  for  $\mathcal{L}^{1}$-a.e. $t\in(a,b)$, for each upper gradient $m$ of the curve $c$. 
\label{tupper}
\end{theorem}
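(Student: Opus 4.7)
The plan is to reduce the problem to classical absolute continuity of real-valued functions by testing $c$ against distances to a countable dense set of base-points. Since $c$ is continuous on $(a,b)$, its image in $(X,d)$ is separable; pick a countable sequence $\{x_n\}_{n \geq 1}$ dense in $c((a,b))$. Define $\varphi_n : (a,b) \to [0,+\infty)$ by $\varphi_n(t) = d(c(t), x_n)$. By the reverse triangle inequality and the upper-gradient bound from Definition \ref{defac},
\[
|\varphi_n(s) - \varphi_n(t)| \leq d(c(s), c(t)) \leq \int_s^t m(r)\, dr \quad (s \leq t),
\]
so each $\varphi_n$ is absolutely continuous on every compact subinterval of $(a,b)$. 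Hence each $\varphi_n$ is differentiable at $\mathcal{L}^1$-a.e.\ $t$, with $|\varphi_n'(t)| \leq m(t)$.

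Next I would introduce the candidate metric derivative $m_0(t) := \sup_n |\varphi_n'(t)|$. As a countable supremum of measurable functions dominated by $m$, it is measurable and lies in $L^1((a,b))$. The key identity is
\[
d(c(s), c(t)) = \sup_n |\varphi_n(s) - \varphi_n(t)|.
\]
The inequality $\geq$ is the triangle inequality; the inequality $\leq$ is obtained by choosing a subsequence $x_{n_k} \to c(s)$, which gives $\varphi_{n_k}(s) \to 0$ and $\varphi_{n_k}(t) \to d(c(s),c(t))$. Applying the fundamental theorem of calculus to each $\varphi_n$ and taking the supremum,
\[
d(c(s), c(t)) \leq \sup_n \int_s^t |\varphi_n'(r)|\, dr \leq \int_s^t m_0(r)\, dr,
\]
so $m_0$ is itself an upper gradient of $c$ and in particular $m_0 \leq m$ a.e.

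Finally I would identify $m_0$ with the metric derivative at every Lebesgue point of $m_0$ at which all $\varphi_n$ are differentiable (a set of full measure, by countability). The upper bound $\limsup_{s \to t} d(c(s), c(t))/|s-t| \leq m_0(t)$ follows from the preceding display combined with Lebesgue differentiation. For the lower bound, at such a $t$ we have, for each fixed $n$,
\[
|\varphi_n'(t)| = \lim_{s \to t} \frac{|\varphi_n(s) - \varphi_n(t)|}{|s-t|} \leq \liminf_{s \to t} \frac{d(c(s), c(t))}{|s-t|},
\]
and taking the supremum over $n$ yields $m_0(t) \leq \liminf_{s \to t} d(c(s),c(t))/|s-t|$. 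Thus $md(c)(t)$ exists, equals $m_0(t)$, and satisfies $md(c) \leq m$ a.e.; minimality and $L^1$-integrability follow at once.

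The step I expect to be the main obstacle is the identity $d(c(s),c(t)) = \sup_n |\varphi_n(s) - \varphi_n(t)|$, which quietly requires density of $\{x_n\}$ in (the closure of) the image $c((a,b))$, together with the fact that the a.e.\ differentiability of the $\varphi_n$ holds \emph{simultaneously} on a common full-measure set — this is where the countability of the test family is essential, and why separability of $c((a,b))$ cannot be dispensed with.
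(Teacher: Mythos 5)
Your proof is correct and is essentially the standard argument for this result: the paper itself gives no proof but defers to \cite{amb} (Theorem 1.1.2, Chapter 1), and the proof there proceeds exactly as you do, via the countable family $\varphi_n(t)=d(c(t),x_n)$, the candidate $m_0=\sup_n|\varphi_n'|$, and the identity $d(c(s),c(t))=\sup_n|\varphi_n(s)-\varphi_n(t)|$. The only cosmetic slip is the phrase ``in particular $m_0\leq m$ a.e.'' after showing $m_0$ is an upper gradient --- that inequality comes from the pointwise bounds $|\varphi_n'|\leq m$ a.e.\ and countability, not from the upper-gradient property, but you had already established it there.
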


\section{Metric spaces with dilations}

We shall use here a slightly particular version of dilatation structures. 
For the general definition of a dilatation structure see \cite{buligadil1} (the general definition 
applies for dilatation structures over ultrametric spaces as well).

\begin{definition}
Let $(X,d)$ be a complete metric space such that for any $x  \in X$ the 
closed ball $\bar{B}(x,3)$ is compact. A {\bf dilatation structure} $(X,d, \delta)$ 
over $(X,d)$ is the assignment to any $x \in X$  and $\varepsilon \in (0,+\infty)$ 
of a invertible homeomorphism, defined as: if 
$\displaystyle   \varepsilon \in (0, 1]$ then  $\displaystyle 
 \delta^{x}_{\varepsilon} : U(x)
\rightarrow V_{\varepsilon}(x)$, else 
$\displaystyle  \delta^{x}_{\varepsilon} : 
W_{\varepsilon}(x) \rightarrow U(x)$, such that the following axioms are satisfied: 
\begin{enumerate}
\item[{\bf A0.}]  there are  numbers  $1<A<B$ such that for any $x \in X$  and any 
$\varepsilon \in (0,1)$ we have 
  the following string of inclusions:
$$ B_{d}(x, \varepsilon) \subset \delta^{x}_{\varepsilon}  B_{d}(x, A) 
\subset V_{\varepsilon}(x) \subset 
W_{\varepsilon^{-1}}(x) \subset \delta_{\varepsilon}^{x}  B_{d}(x, B) $$
Moreover for any compact set $K \subset X$ there are $R=R(K) > 0$ and 
$\displaystyle \varepsilon_{0}= \varepsilon(K) \in (0,1)$  such that  
for all $\displaystyle u,v \in \bar{B}_{d}(x,R)$ and all 
$\displaystyle \varepsilon  \in (0,\varepsilon_{0})$,  we have 
$$\delta_{\varepsilon}^{x} v \in W_{\varepsilon^{-1}}( \delta^{x}_{\varepsilon}u) \ .$$

\item[{\bf A1.}]  We  have 
$\displaystyle  \delta^{x}_{\varepsilon} x = x $ for any point $x$. 
We also have $\displaystyle \delta^{x}_{1} = id$ for any $x \in X$. 
Let us define the topological space
$$ dom \, \delta = \left\{ (\varepsilon, x, y) \in (0,+\infty) \times X 
\times X \mbox{ :  if } \varepsilon \leq 1 \mbox{ then } y 
\in U(x) \,
\, , 
\right.$$ 
$$\left. \mbox{  else } y \in W_{\varepsilon}(x) \right\} $$ 
with the topology inherited from $(0,+\infty) \times X \times X$ endowed with
 the product topology. Consider also 
$\displaystyle Cl(dom \, \delta)$, 
the closure of 
$dom \, \delta$ in $\displaystyle [0,+\infty) \times X \times X$. The function $\displaystyle \delta : dom \, \delta 
\rightarrow  X$ defined by $\displaystyle \delta (\varepsilon,  x, y)  = 
\delta^{x}_{\varepsilon} y$ is continuous. Moreover, it can be continuously 
extended to the set $\displaystyle Cl(dom \, \delta)$ and we have 
$$\lim_{\varepsilon\rightarrow 0} \delta_{\varepsilon}^{x} y \, = \, x  $$

\item[{\bf A2.}] For any  $x, \in X$, $\displaystyle \varepsilon, \mu \in (0,+\infty)$
 and $\displaystyle u \in U(x)$   we have the equality: 
$$ \delta_{\varepsilon}^{x} \delta_{\mu}^{x} u  = \delta_{\varepsilon \mu}^{x} u $$ 
whenever one of the sides are well defined.

\item[{\bf A3.}]  For any $x$ there is a distance  function $\displaystyle (u,v) \mapsto d^{x}(u,v)$, defined for any $u,v$ in the closed ball (in distance d) $\displaystyle 
\bar{B}(x,A)$, such that 
$$\lim_{\varepsilon \rightarrow 0} \quad \sup  \left\{  \mid 
\frac{1}{\varepsilon} d(\delta^{x}_{\varepsilon} u, \delta^{x}_{\varepsilon} v) \ - \ d^{x}(u,v) \mid \mbox{ :  } u,v \in \bar{B}_{d}(x,A)\right\} \ =  \ 0$$
uniformly with respect to $x$ in compact set. 

\end{enumerate}

The {\bf dilatation structure is strong} if it satisfies the following supplementary condition: 

\begin{enumerate}
\item[{\bf A4.}] Let us define 
$\displaystyle \Delta^{x}_{\varepsilon}(u,v) =
\delta_{\varepsilon^{-1}}^{\delta^{x}_{\varepsilon} u} \delta^{x}_{\varepsilon} v$. 
Then we have the limit 
$$\lim_{\varepsilon \rightarrow 0}  \Delta^{x}_{\varepsilon}(u,v) =  \Delta^{x}(u, v)  $$
uniformly with respect to $x, u, v$ in compact set. 
\end{enumerate}
\label{defweakstrong}
\end{definition}
 
We shall use many times from now the words "sufficiently close". This deserves 
a definition. 

\begin{definition}
Let  $(X,d, \delta)$ be a strong dilatation structure. We say that a property 
$\displaystyle \mathcal{P}(x_{1},x_{2},
x_{3}, ...)$ holds for $\displaystyle x_{1}, x_{2}, x_{3},
...$ {\bf sufficiently close} if for any compact, non empty set $K \subset X$, there
is a positive constant $C(K)> 0$ such that $\displaystyle \mathcal{P}(x_{1},x_{2},
x_{3}, ...)$ is true for any $\displaystyle x_{1},x_{2},
x_{3}, ... \in K$ with $\displaystyle d(x_{i}, x_{j}) \leq C(K)$.
\end{definition}

We shall look at dilatation structures from the metric point of view, 
by  using Gromov-Hausdorff distance and metric profiles. 
 
 We state the interpretation of the Axiom A3  as a theorem. 
But before a definition: we denote by $(\delta, \varepsilon)$ the distance on 
\[\bar{B}_{d^{x}}(x,1) = \left\{ y \in X \mbox{: } d^{x}(x,y) \leq 1 \right\}\] given by
\[(\delta, \varepsilon)(u,v) = \frac{1}{\varepsilon} d(\delta^{x}_{\varepsilon} u , \delta^{x}_{\varepsilon} v) \]

\begin{theorem}
\label{thcone}
Let $(X,d,\delta)$ be a dilatation structure. The following are consequences of the Axioms A0~-~A3 only: 
\begin{enumerate}
\item[(a)] 
for all $u,v \in X$ such that $\displaystyle d(x,u)\leq 1$ and $\displaystyle d(x,v) \leq 1$  and all $\mu \in (0,A)$ we have
\[d^{x}(u,v) = \frac{1}{\mu} d^{x}(\delta_{\mu}^{x} u , \delta^{x}_{\mu} v) \]
We shall say that $d^{x}$ has the cone property with respect to dilatations. 
\item[(b)] 
The curve $\displaystyle \varepsilon> 0 \mapsto \mathbb{P}^{x}(\varepsilon) = [\bar{B}_{d^{x}}(x,1), (\delta, \varepsilon), x]$ is a  metric profile.
\end{enumerate}
\end{theorem}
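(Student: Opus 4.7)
For part (a), the plan is to chain A2 and A3. Apply A3 at the pair $(\delta_\mu^x u, \delta_\mu^x v)$, which lies in $\bar{B}_d(x,A)$ thanks to A0/A1 when $\mu \in (0,A)$ and $u,v \in \bar{B}_d(x,1)$:
\[
d^x(\delta_\mu^x u,\, \delta_\mu^x v) \;=\; \lim_{\varepsilon \to 0} \frac{1}{\varepsilon}\, d\bigl(\delta_\varepsilon^x \delta_\mu^x u,\, \delta_\varepsilon^x \delta_\mu^x v\bigr).
\]
By A2 the composition $\delta_\varepsilon^x \delta_\mu^x$ equals $\delta_{\varepsilon\mu}^x$. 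Setting $\eta = \varepsilon\mu$, the right side rewrites as
\[
\mu \lim_{\eta \to 0} \frac{1}{\eta}\, d\bigl(\delta_\eta^x u,\, \delta_\eta^x v\bigr) \;=\; \mu\, d^x(u,v),
\]
which is the cone property. The only book-keeping concerns domains, handled by A0.

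For part (b), I first extend $\mathbb{P}^x$ continuously to $\varepsilon = 0$ by declaring $\mathbb{P}^x(0) = [\bar{B}_{d^x}(x,1),\, d^x,\, x]$. Continuity at $0$ is immediate from A3: the family of metrics $(\delta, \varepsilon)$ converges uniformly to $d^x$ on $\bar{B}_{d^x}(x, 1)$, so the identity relation on the common underlying set realizes a Gromov--Hausdorff comparison whose distortion vanishes as $\varepsilon \to 0$.

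The self-similarity clause of Definition~\ref{dprofile} is the substantive point. The crucial algebraic identity, a second use of A2, is
\[
(\delta, b\varepsilon)(u,v) \;=\; \frac{1}{b\varepsilon}\, d\bigl(\delta_b^x \delta_\varepsilon^x u,\, \delta_b^x \delta_\varepsilon^x v\bigr) \;=\; \frac{1}{\varepsilon}\,(\delta, b)\bigl(\delta_\varepsilon^x u,\, \delta_\varepsilon^x v\bigr).
\]
Thus $u \mapsto \delta_\varepsilon^x u$ is an exact isometry from $(\bar{B}_{d^x}(x,1),\, (\delta, b\varepsilon))$ into $(X,\, \tfrac{1}{\varepsilon}(\delta, b))$. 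I would take $\rho = \{(u,\, \delta_\varepsilon^x u) : u \in \bar{B}_{d^x}(x,1)\}$ as the correspondence comparing $\mathbb{P}^x(b\varepsilon)$ with $\mathbb{P}^m_{(\delta,b)}(\varepsilon, x)$; condition (\ref{photoquasi}) is then satisfied with error zero, and the Gromov--Hausdorff error is carried entirely by the two density conditions: one must show that $\delta_\varepsilon^x \bar{B}_{d^x}(x,1)$ is $\mathcal{O}(\varepsilon)$-dense, in the metric $\tfrac{1}{\varepsilon}(\delta, b)$, in the target ball $\bar{B}_{(\delta,b)}(x, \varepsilon)$ centred at $x$.

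The main obstacle is this final density verification. Part (a) establishes $\delta_\varepsilon^x \bar{B}_{d^x}(x,1) = \bar{B}_{d^x}(x, \varepsilon)$ exactly in the $d^x$-metric, but one must translate that identification from $d^x$-balls to $(\delta, b)$-balls. My plan is to invoke A3 at two scales---at $b\varepsilon$ to compare $d^x$ with $(\delta, b\varepsilon)$ on the source side, and at $b$ to compare $d^x$ with $(\delta, b)$ on the target side---and combine the two uniform errors provided by A3 to obtain the $\mathcal{O}(\varepsilon)$ density in the rescaled metric $\tfrac{1}{\varepsilon}(\delta, b)$, which closes the argument.
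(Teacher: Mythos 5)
Your part (a) is the paper's argument in cleaner form: both proofs apply A3 at the pair $(\delta_\mu^x u,\delta_\mu^x v)$, collapse $\delta_\varepsilon^x\delta_\mu^x$ to $\delta_{\varepsilon\mu}^x$ via A2, and recognize the resulting limit as $\mu\, d^x(u,v)$ by a second use of A3; the paper merely packages this as a triangle-inequality sandwich rather than a direct substitution. For part (b) you take the mirror image of the paper's correspondence. The paper works from the target ball back to the source: to each $u$ with $\frac{1}{\mu}(\delta^x,\varepsilon)(x,u)\leq 1$ it assigns a point $w(u)\in\bar{B}_{d^x}(x,1)$ with $\frac{1}{\mu}d^x(u,\delta^x_\mu w(u))=\mathcal{O}(\varepsilon)$, and then pays for the fact that $u\mapsto w(u)$ is only approximately metric-preserving, arriving at the bound $\mathcal{O}(\varepsilon\mu)+\frac{1}{\mu}\mathcal{O}(\varepsilon)+\mathcal{O}(\varepsilon)$. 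You instead push forward by $\delta^x_\varepsilon$, which by A2 is an \emph{exact} isometry between the two rescaled metrics, so all the error migrates into the density condition. This is a genuine and rather clean reorganization, but be aware that the density step you defer is exactly where the paper's $w(u)$ construction lives, and it is not disposed of by "A3 at two scales" alone: for $v$ in the target ball the exact preimage $\delta^x_{\varepsilon^{-1}}v$ satisfies only $d^x(x,\delta^x_{\varepsilon^{-1}}v)\leq 1+\frac{1}{\varepsilon}\mathcal{O}(b)$, since A3 supplies a uniform \emph{additive} error at the outer scale; projecting back into $\bar{B}_{d^x}(x,1)$ costs a term of the shape $\frac{1}{\varepsilon}\mathcal{O}(b)$, which is precisely the $\frac{1}{\mu}\mathcal{O}(\varepsilon)$ appearing in the paper's final display (and which the paper, too, simply carries along). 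So your route reproduces the paper's estimate rather than improving on it, and you should write out that projection argument explicitly instead of leaving it as a plan; once you do, the two proofs are quantitatively identical.
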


\begin{proof}
(a) For $\varepsilon, \mu \in (0,1)$ we have
\begin{align*}
\left| \frac{1}{\varepsilon \mu} d(\delta_{\varepsilon}^{x}\delta^{x}_{\mu} u, \delta_{\varepsilon}^{x}\delta^{x}_{\mu} v) - d^{x}(u,v) \right|
& \leq \,
\left| \frac{1}{\varepsilon \mu} d(\delta_{\varepsilon \mu}^{x} u, \delta_{\varepsilon}^{x}\delta^{x}_{\mu} u) -  
\frac{1}{\varepsilon \mu} d(\delta_{\varepsilon \mu}^{x} v, \delta_{\varepsilon}^{x}\delta^{x}_{\mu} v) \right|\\
& + \left|\frac{1}{\varepsilon \mu} d(\delta_{\varepsilon \mu}^{x}u, \delta_{\varepsilon \mu}^{x} v) - d^{x}(u,v)\right|
\end{align*}
Use now the Axioms A2 and A3 and pass to the limit with $\varepsilon \rightarrow 0$. This gives the desired equality. 

\medskip
(b)We have to prove that $\mathbb{P}^{x}$ is a metric profile. For this we have to compare two pointed metric spaces: 
\[ \left(\bar{B}_{d^{x}}(x,1), (\delta^{x}, \varepsilon \mu), x\right) \ \mbox{ and } \ \left(\bar{B}_{\frac{1}{\mu}(\delta^{x}, \varepsilon)}(x,1), \frac{1}{\mu}(\delta^{x}, \varepsilon), x \right) \]
Let $u \in X$ such that 
\[\frac{1}{\mu}(\delta^{x}, \varepsilon)(x, u) \leq 1 \]
This means that
\[\frac{1}{\varepsilon} d(\delta^{x}_{\varepsilon} x , \delta^{x}_{\varepsilon} u) \ \leq \ \mu \]
Further use the  Axioms  A1, A2 and the cone property proved before:
\[\frac{1}{\varepsilon} d^{x}(\delta^{x}_{\varepsilon} x, \delta^{x}_{\varepsilon} u) \ \leq \ (\mathcal{O}(\varepsilon) + 1) \mu\]
therefore, 
\[d^{x}(x,u)  \  \leq \ (\mathcal{O}(\varepsilon) + 1) \mu \]
It follows that for any $\displaystyle u \in \bar{B}_{\frac{1}{\mu}(\delta^{x}, \varepsilon)}(x,1)$ we can choose $\displaystyle w(u) \in \bar{B}_{d^{x}}(x,1)$ such that
\[\frac{1}{\mu} d^{x}(u, \delta^{x}_{\mu} w(u)) = \mathcal{O}(\varepsilon) \]
We want to prove that 
\[\mid \frac{1}{\mu}(\delta^{x}, \varepsilon) (u_{1}, u_{2}) - (\delta^{x}, \varepsilon \mu) (w(u_{1}), 
w(u_{2}) ) \mid \ \leq \ \mathcal{O}(\varepsilon \mu) + \frac{1}{\mu} \mathcal{O}(\varepsilon) + \mathcal{O}(\varepsilon) \ \]
This goes as follows: 
\begin{align*}
\lvert \frac{1}{\mu}(\delta^{x}, \varepsilon) (u_{1}, u_{2})  
&- (\delta^{x}, \varepsilon \mu) (w(u_{1}), w(u_{2}) ) \rvert
\,=\, \\ 
 & = \, \left| \frac{1}{\varepsilon \mu} d( \delta^{x}_{\varepsilon} u_{1}, \delta^{x}_{\varepsilon} u_{2}) -  \frac{1}{\varepsilon \mu} d(\delta^{x}_{\varepsilon \mu} w(u_{1}), \delta^{x}_{\varepsilon \mu} w(u_{2}))\right|  \\
&\leq 
\mathcal{O}(\varepsilon \mu) + \left|\frac{1}{\varepsilon \mu} d( \delta^{x}_{\varepsilon} u_{1}, \delta^{x}_{\varepsilon} u_{2}) - \frac{1}{\varepsilon \mu} d(\delta^{x}_{\varepsilon}\delta^{x}_{ \mu} w(u_{1}), \delta^{x}_{\varepsilon}\delta^{x}_{ \mu} w(u_{2}))\right| \\
&\leq 
\mathcal{O}(\varepsilon \mu) + \frac{1}{\mu} \mathcal{O}(\varepsilon) \, + \, 
\frac{1}{\mu} \mid d^{x} (u_{1}, u_{2}) - d^{x}(\delta^{x}_{\mu} w(u_{1}) , \delta^{x}_{\mu} w(u_{2}))\mid
\end{align*}
In order to obtain the last estimate we used twice  the Axiom A3. We proceed as follows:  
\begin{gather*}
\mathcal{O}(\varepsilon \mu) \ + \ \frac{1}{\mu} \mathcal{O}(\varepsilon) \ + \ 
\frac{1}{\mu} \mid d^{x} (u_{1}, u_{2}) - d^{x}(\delta^{x}_{\mu} w(u_{1}) , \delta^{x}_{\mu} w(u_{2})) \mid \, \leq \ \\ 
\leq \mathcal{O}(\varepsilon \mu) \ + \ \frac{1}{\mu} \mathcal{O}(\varepsilon) \ + \ \frac{1}{\mu} 
d^{x}(u_{1}, \delta^{x}_{\mu} w(u_{1})) \  + \ \frac{1}{\mu} 
d^{x}(u_{1}, \delta^{x}_{\mu} w(u_{2}))\\ 
\leq \  \mathcal{O}(\varepsilon \mu) + \frac{1}{\mu} \mathcal{O}(\varepsilon) + \mathcal{O}(\varepsilon) 
\end{gather*}
This shows that the property (b) of a metric profile is satisfied. 
The property (a)  is proved in  the Theorem \ref{tmit}. 
\end{proof}

The following theorem is related to  Mitchell \cite{mit} Theorem 1, concerning  sub-riemannian geometry. 

\begin{theorem}
\label{tmit}
In the hypothesis of theorem \ref{thcone}, we have  the following limit: 
\[\lim_{\varepsilon \rightarrow 0} \ \frac{1}{\varepsilon} \sup \left\{  \mid d(u,v) - d^{x}(u,v) \mid:  d(x,u) \leq \varepsilon, \ d(x,v) \leq \varepsilon \right\} = 0 \]
Therefore if $d^{x}$ is a true (i.e. nondegenerate) distance, then  $(X,d)$ admits a metric tangent space 
in $x$. 

Moreover, the metric profile $\displaystyle [\bar{B}_{d^{x}}(x,1), (\delta, \varepsilon), x]$ is  
almost nice, in the following sense. Let $c \in (0,1)$. Then  we have the inclusion
\[\delta^{x}_{\mu^{-1}}\left(\bar{B}_{\frac{1}{\mu}(\delta^{x}, \varepsilon)}(x,c)\right) \ \subset \ 
\bar{B}_{d^{x}}(x,1) \]
Moreover,  the following  Gromov-Hausdorff distance  is of order $\displaystyle \mathcal{O}(\varepsilon)$ 
for $\mu$ fixed (that is the modulus of convergence $\mathcal{O}(\varepsilon)$ does not depend on $\mu$): 
\[ \mu \ d_{GH}\left(  [\bar{B}_{d^{x}}(x,1) , (\delta^{x}, \varepsilon), x], \ [\delta^{x}_{\mu^{-1}}\left(\bar{B}_{\frac{1}{\mu}(\delta^{x}, \varepsilon)}(x,c)\right) , (\delta^{x}, \varepsilon \mu), x]\right) = 
\mathcal{O}(\varepsilon) \] 
For another Gromov-Hausdorff distance we have the estimate
\[d_{GH}\left( [\bar{B}_{\frac{1}{\mu}(\delta^{x}, \varepsilon)}(x,c), \frac{1}{\mu} (\delta^{x}, \varepsilon), x] \ , \ [\delta^{x}_{\mu^{-1}}\left(\bar{B}_{\frac{1}{\mu}(\delta^{x}, \varepsilon)}(x,c)\right) , (\delta^{x}, \varepsilon \mu), x]\right) = \mathcal{O}(\varepsilon \mu)\]
when $\varepsilon \in (0,\varepsilon(c))$. 
\end{theorem}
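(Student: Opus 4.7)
The plan is to establish the first displayed limit directly from Axioms A0 and A3 together with the cone property of Theorem \ref{thcone}(a), and then bootstrap the remaining assertions from this estimate combined with Axiom A2.

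For the first limit, given $u',v' \in \bar{B}_d(x,\varepsilon)$, Axiom A0 supplies $u,v \in \bar{B}_d(x,A)$ with $u' = \delta^x_\varepsilon u$ and $v' = \delta^x_\varepsilon v$. Axiom A3 then yields $\left|\frac{1}{\varepsilon} d(u',v') - d^x(u,v)\right| \leq \mathcal{O}(\varepsilon)$, uniformly in $u,v$ in compact sets, while the cone property provides the exact equality $d^x(u,v) = \frac{1}{\varepsilon} d^x(u',v')$. Subtracting these gives $|d(u',v') - d^x(u',v')| \leq \varepsilon \cdot \mathcal{O}(\varepsilon)$; dividing by $\varepsilon$ and taking the supremum yields the stated limit. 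When $d^x$ is nondegenerate, the map $w \mapsto \delta^x_\varepsilon w$ is an exact isometry from $(\bar{B}_{d^x}(x,1), d^x)$ onto its image with $\frac{1}{\varepsilon} d^x$, and combining this with the limit just obtained shows that $\mathbb{P}^m(\varepsilon,x)$ converges in the Gromov--Hausdorff sense to $[\bar{B}_{d^x}(x,1), d^x, x]$, which is therefore a metric tangent space at $x$.

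For the inclusion claim, if $u \in \bar{B}_{\frac{1}{\mu}(\delta^x,\varepsilon)}(x,c)$ then $d(x,\delta^x_\varepsilon u) \leq c\mu\varepsilon$, and the same calculation as in the proof of Theorem \ref{thcone}(b) yields $d^x(x,u) \leq c\mu(1 + \mathcal{O}(\varepsilon))$. The cone property then gives $d^x(x, \delta^x_{\mu^{-1}} u) = \mu^{-1} d^x(x,u) \leq c(1 + \mathcal{O}(\varepsilon)) \leq 1$ as soon as $\varepsilon$ is small relative to $1-c$; this is what defines the threshold $\varepsilon(c)$. For the second Gromov--Hausdorff estimate, Axiom A2 shows that $u \mapsto \delta^x_{\mu^{-1}} u$ is a basepoint-preserving bijective isometry between the two spaces, since $(\delta^x, \varepsilon\mu)(\delta^x_{\mu^{-1}} u, \delta^x_{\mu^{-1}} v) = \frac{1}{\varepsilon\mu} d(\delta^x_\varepsilon u, \delta^x_\varepsilon v) = \frac{1}{\mu}(\delta^x,\varepsilon)(u,v)$, so that distance is in fact zero and trivially $\mathcal{O}(\varepsilon\mu)$. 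The first estimate then reduces, by this identification and the triangle inequality for $d_{GH}$, to a comparison between $[\bar{B}_{d^x}(x,1), (\delta^x,\varepsilon), x]$ and the sub-ball $[\delta^x_{\mu^{-1}}(\bar{B}_{\frac{1}{\mu}(\delta^x,\varepsilon)}(x,c)), (\delta^x, \varepsilon\mu), x]$; this comparison is controlled by the metric-profile estimate underlying part (b) of Theorem \ref{thcone}, the prefactor $\mu$ in the statement absorbing the expected scaling.

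The main obstacle I expect is bookkeeping the various $\mathcal{O}(\cdot)$ error terms uniformly in $x$ and verifying that points such as $\delta^x_{\mu^{-1}} u$ remain in the compact sets on which Axiom A3 and the cone property provide uniform control; in particular, the restriction $\mu \in (0,A)$ in the cone property must be respected, which is precisely what forces $c < 1$ and the existence of a threshold $\varepsilon \in (0,\varepsilon(c))$ in the last two assertions.
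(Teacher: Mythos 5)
Your proposal is correct and follows essentially the same route as the paper's proof: lift $u,v\in\bar{B}_{d}(x,\varepsilon)$ via A0, combine the cone property with the uniform convergence of Axiom A3 for the first limit, take $w(u)=\delta^{x}_{\mu^{-1}}u$ for the second part, and reduce the Gromov--Hausdorff estimates to the chain of inequalities from Theorem \ref{thcone}(b). Your observations that the second Gromov--Hausdorff distance is in fact exactly zero by A2, and that the relevant axiom for the first estimate is A3 (the paper's proof misattributes it to A2), are correct refinements rather than deviations.
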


\begin{proof}
We start from the Axioms A0, A3 and we use the cone property. By A0,  for $\varepsilon \in (0,1)$ and $\displaystyle u,v \in  \bar{B}_{d}(x,\varepsilon)$ there exist $\displaystyle U,V  \in \bar{B}_{d}(x,A)$ such that \[u = \delta^{x}_{\varepsilon} U , v = \delta^{x}_{\varepsilon} V . \]
By the cone property we have 
\[\frac{1}{\varepsilon} \mid d(u,v) - d^{x}(u,v) \mid \,=\, \left|\frac{1}{\varepsilon} d(\delta^{x}_{\varepsilon} U,\delta^{x}_{\varepsilon} V) - d^{x}(U,V) \right| \]
By A2 we have 
\[  \left| \frac{1}{\varepsilon} d(\delta^{x}_{\varepsilon} U,\delta^{x}_{\varepsilon} V) - d^{x}(U,V) \right|\, \leq\, \mathcal{O}(\varepsilon)\]
This proves the first part of the theorem.

For the second part of the theorem take any 
$\displaystyle u \in \bar{B}_{\frac{1}{\mu}(\delta^{x}, \varepsilon)}(x,c)$. 
Then we have 
\[d^{x}(x,u) \, \leq \,  c \mu + \mathcal{O}(\varepsilon) \]
Then there exists   $\varepsilon(c) > 0$ such that 
for any $\varepsilon \in (0,\varepsilon(c))$ and $u$ in the mentioned ball we have
\[d^{x}(x,u) \,\leq \, \mu\]
In this case we can take directly $\displaystyle w(u) \,= \, \delta^{x}_{\mu^{-1}} u$ and simplify 
the  string of inequalities from the proof of  Theorem 
\ref{thcone}, point (b), to get eventually the three points 
from the second part of the theorem. 
\end{proof}

\section{Length metric spaces with dilations}
\label{seclds}

Consider $(X,d)$ a complete, locally compact metric space, and a triple 
 $(X,d,\delta)$  which satisfies  A0, A1, A2. Denote by $\displaystyle Lip([0,1],X,d)$ the space of 
$d$-Lipschitz curves $c:[0,1] \rightarrow X$. Let also $\displaystyle 
l_{d}$ denote the length functional associated to the distance $d$.

\subsection{Gamma-convergence of length functionals}

\begin{definition}
For any $\varepsilon \in (0,1)$ we define the {\bf length functional }
$$l_{\varepsilon}: \mathcal{L}_{\varepsilon}(X,d,  \delta) \rightarrow
[0,+\infty] \quad , \quad l_{\varepsilon}(x,c) \ = \ l^{x}_{\varepsilon}(c) \ = \ \frac{1}{\varepsilon}
\, 
l_{d}(\delta^{x}_{\varepsilon} c)  $$
The domain of definition of the functional 
$\displaystyle l_{\varepsilon}$ is the space: 
 $$\mathcal{L}_{\varepsilon}(X,d,  \delta) \ = \ \left\{(x ,c) \in X  
 \times \mathcal{C}([0,1],X) 
\mbox{ : } c: [0,1] \in U(x) \, \, , \,  \right.$$ 
$$\left. \delta^{x}_{\varepsilon}c \mbox{ is } 
d-Lip \mbox{ and } Lip(\delta^{x}_{\varepsilon}c)\,  \leq \, 2  \, 
l_{d}(\delta^{x}_{\varepsilon}c) 
\right\} $$
\label{thespaceleps}
\end{definition}

The last condition from the definition of $\displaystyle 
\mathcal{L}_{\varepsilon}(X,d,  \delta)$ is a selection of parameterization 
of the path $c([0,1])$. Indeed, by the reparameterization theorem, if 
$\displaystyle \delta^{x}_{\varepsilon} c :[0,1] \rightarrow (X,d)$ is a 
$d$-Lipschitz curve of length $\displaystyle L = l_{d}(\delta^{x}_{\varepsilon}c)$ 
then $\displaystyle \delta^{x}_{\varepsilon}c([0,1])$ can be reparameterized by length, that is there exists a 
increasing  function $\phi:[0,L] \rightarrow [0,1]$ such that $\displaystyle 
c'= \delta^{x}_{\varepsilon} c\circ \phi$ is 
a $d$-Lipschitz curve with $Lip(c') \leq 1$. But we can use a second affine
reparameterization which sends $[0,L]$ back to $[0,1]$ and we get a Lipschitz
curve $c"$ with $c"([0,1]) = c'([0,1])$ and $\displaystyle Lip(c") \leq 2 
l_{d}(c)$.

We shall use the following definition of Gamma-convergence (see the book
 \cite{dalmaso} for the notion of Gamma-convergence). Notice the use of
convergence of sequences only in the second part of the definition.

\begin{definition}
Let $Z$ be a metric space with distance function $D$ and $\displaystyle \left(
l_{\varepsilon}\right)_{\varepsilon > 0}$ be a family of functionals $\displaystyle 
l_{\varepsilon}: Z_{\varepsilon} \subset Z \rightarrow [0,+\infty]$. Then 
$\displaystyle l_{\varepsilon}$ {\bf Gamma-converges} to the functional 
$\displaystyle l: Z_{0} \subset Z \rightarrow [0,+\infty]$ if: 
\begin{enumerate}
\item[(a)] ({\bf liminf inequality}) for any function $\displaystyle \varepsilon \in
(0,\infty)  \mapsto 
x_{\varepsilon} \in Z_{\varepsilon}$ such that $\displaystyle \lim_{\varepsilon
\rightarrow 0} x_{\varepsilon} \, = \, x_{0} \in Z_{0}$ we have 
$$l(x_{0}) \, \leq \, \liminf_{\varepsilon \rightarrow 0}
l_{\varepsilon}(x_{\varepsilon})$$
\item[(b)] ({\bf existence of a recovery sequence}) For any $\displaystyle x_{0} \in Z_{0}$ 
and for any sequence $\displaystyle \left( \varepsilon_{n} \right)_{n \in \mathbb{N}}$
such that $\displaystyle \lim_{n \rightarrow \infty} \varepsilon_{n} \, = \, 0$ there
is a sequence $\displaystyle \left( x_{n} \right)_{n \in \mathbb{N}}$ with
$\displaystyle x_{n} \in Z_{\varepsilon_{n}}$ for any $n \in \mathbb{N}$, such that 
$$l(x_{0}) \, = \, \lim_{n \rightarrow \infty}
l_{\varepsilon_{n}}(x_{n})$$
\end{enumerate}
\end{definition}

We shall take as the metric space $Z$ the space
 $X \times \mathcal{C}([0,1],X)$ with the distance 
 $$ D((x,c), (x', c')) \ = \ \max\left\{ d(x, x') \, , \, \sup \left\{ d(c(t),
 c'(t)) \mbox{ : } t \in [0,1] \right\} \right\} $$

Let  $\displaystyle \mathcal{L}(X,d,\delta)$be the class of all 
$\displaystyle (x ,c) \in X  \times \mathcal{C}([0,1],X)$ which appear as  limits  
$\displaystyle (x_{n}, c_{n}) \rightarrow (x,c)$, with $\displaystyle 
(x_{n}, c_{n}) \in \mathcal{L}_{\varepsilon_{n}}(X,d,  \delta)$, 
the family $\displaystyle (c_{n})_{n}$ is $d$-equicontinuous and $\displaystyle 
\varepsilon_{n} \rightarrow 0$ as $n \rightarrow \infty$.

\begin{definition}
A triple $(X,d,\delta)$ is a {\bf length dilatation structure} if $(X,d)$ is a
complete, locally compact metric space such that A0, A1, 
A2,  are satisfied, together with  the following axioms: 
\begin{enumerate}
\item[\bf{A3L.}] there is a functional $\displaystyle l : \mathcal{L}(X,d,  \delta) \rightarrow
[0,+\infty]$ such that for any $\varepsilon_{n} \rightarrow 0$ as $n \rightarrow
\infty$ the sequence of functionals 
$\displaystyle l_{\varepsilon_{n}}$ Gamma-converges to the functional $l$. 
\item[\bf{A4+}] Let us define 
$\displaystyle \Delta^{x}_{\varepsilon}(u,v) =
\delta_{\varepsilon^{-1}}^{\delta^{x}_{\varepsilon} u} \delta^{x}_{\varepsilon}
v$ and $\displaystyle \Sigma^{x}_{\varepsilon}(u,v) = 
\delta_{\varepsilon^{-1}}^{x} \delta_{\varepsilon}^{\delta^{x}_{\varepsilon} u}
v$. 
Then we have the limits 
$$\lim_{\varepsilon \rightarrow 0}  \Delta^{x}_{\varepsilon}(u,v) =  \Delta^{x}(u, v)  $$
$$\lim_{\varepsilon \rightarrow 0}  \Sigma^{x}_{\varepsilon}(u,v) =  \Sigma^{x}(u, v)  $$
uniformly with respect to $x, u, v$ in compact set. 
\end{enumerate} 
\label{deflds}
\end{definition}

\begin{remark}
For strong dilatation structures the axioms A0 - A4 imply A4+. The
transformations $\displaystyle \Sigma^{x}_{\varepsilon}(u, \cdot)$ have the
interpretation of approximate left translations in the tangent space of $(X,d)$
at $x$.  
\end{remark}

For any $\varepsilon \in (0,1)$ and any $x \in X$ the length functional 
$\displaystyle l^{x}_{\varepsilon}$ induces a distance on $U(x)$: 
$$ \mathring{d}^{x}_{\varepsilon}(u,v) \ = \ \inf\left\{ l^{x}_{\varepsilon}(c)
\mbox{ : } (x,c) \in \mathcal{L}_{\varepsilon}(X,d,  \delta) \, , \, c(0) = u \,
, \, c(1) = v \right\} $$
In the same way the length functional $l$ from A3L induces a distance
$\displaystyle \mathring{d}^{x}$ on $U(x)$.

Gamma-convergence implies that 
\begin{equation}
\mathring{d}^{x}(u,v) \, \geq \, \limsup_{\varepsilon \rightarrow 0} \mathring{d}^{x}_{\varepsilon}(u,v)
\label{dsup}
\end{equation}

\begin{remark}
Without supplementary hypotheses we cannot prove A3 from A3L, that is 
in principle length dilatation structures are not strong dilatation structures. 
\label{rkused}
\end{remark}

\section{The Radon-Nikodym property}
\label{radon}
 
\subsection{Differentiability with respect to dilatation structures}

For any strong dilatation structure or length dilatation structure there is an associated  notion  of differentiability (section 7.2 \cite{buligadil1}). 
First we need the definition of a morphism of conical groups. 

\begin{definition}
 Let $(N,\delta)$ and $(M,\bar{\delta})$ be two  conical groups. A function $f:N\rightarrow M$ is a conical group morphism if $f$ is a group morphism and for any $\varepsilon>0$ and $u\in N$ we have 
 $\displaystyle f(\delta_{\varepsilon} u) = \bar{\delta}_{\varepsilon} f(u)$. 
\label{defmorph}
\end{definition}

The definition of the derivative, or differential,  with respect to dilatations structures follows. In the case of a pair of Carnot groups this is just the definition of the Pansu derivative 
introduced in \cite{pansu}.

 \begin{definition}
 Let $(X, d, \delta)$ and $(Y, \overline{d}, \overline{\delta})$ be two 
 strong dilatation structures  or length and $f:X \rightarrow Y$ be a continuous function. The function $f$ is differentiable in $x$ if there exists a 
 conical group morphism  $\displaystyle D \, f(x):T_{x}X\rightarrow T_{f(x)}Y$, defined on a neighbourhood of $x$ with values in  a neighbourhood  of $f(x)$ such that 
\begin{equation}
\lim_{\varepsilon \rightarrow 0} \sup \left\{  \frac{1}{\varepsilon} \overline{d} \left( f\left( \delta^{x}_{\varepsilon} u\right) ,  \overline{\delta}^{f(x)}_{\varepsilon} D \, f(x)  (u) \right) \mbox{ : } d(x,u) \leq \varepsilon \right\}Ê  = 0 , 
\label{edefdif}
\end{equation}
The morphism $\displaystyle D \, f(x) $ is called the derivative, or differential,  of $f$ at $x$.

\label{defdiffer}
\end{definition}

The definition also makes sense if the function $f$ is defined on a open subset of $(X,d)$.

\subsection{The Radon-Nikodym property}
 
 \begin{definition} 
A strong dilatation structure or a length dilatation structure
  has the {\bf Radon-Nikodym property (or rectifiability
 property, or RNP)} if  any  
Lipschitz curve $\displaystyle c : [a,b] \rightarrow (X,d)$ is derivable 
almost everywhere. 
\label{defrn}
 \end{definition}

\subsection{Two examples}
\label{2ex}

The following two easy examples will 
show that not any strong dilatation structure has the Radon-Nikodym property.

For  $\displaystyle (X,d)  =  ( \mathbb{V}, d)$, a real, finite dimensional,
normed vector space, with distance $d$ induced by the norm, the (usual) 
 dilatations $\displaystyle \delta^{x}_{\varepsilon}$ are given by:  
$$ \delta_{\varepsilon}^{x} y \ = \ x + \varepsilon (y-x) $$
Dilatations are defined everywhere.

There are few things to check:  axioms 0,1,2 are obviously 
true. For axiom A3, remark that for any $\varepsilon > 0$, $x,u,v \in X$ we 
have: 
$$\frac{1}{\varepsilon} d(\delta^{x}_{\varepsilon} u , 
\delta^{x}_{\varepsilon} v ) \ = \ d(u,v) \ , $$
therefore for any $x \in X$ we have $\displaystyle d^{x} = d$. 

Finally, let us check the axiom A4. For any $\varepsilon > 0$ and $x,u,v \in X$ we have
$$\delta_{\varepsilon^{-1}}^{\delta_{\varepsilon}^{x} u} \delta_{\varepsilon}^{x} v \ = \ 
x + \varepsilon  (u-x) + \frac{1}{\varepsilon} \left( x+ \varepsilon(v-x) - x - \varepsilon(u-x) \right) \ = \ $$
$$ = \ x + \varepsilon  (u-x) + v - u$$ 
therefore this quantity converges to 
$$x + v - u \ = \ x + (v - x) - (u - x)$$
as $\varepsilon \rightarrow 0$. The axiom A4 is verified. 

This dilatation structure has the Radon-Nikodym property.

Further is an example of a dilatation structure which does not have the
Radon-Nikodym property.  Take $\displaystyle X = \mathbb{R}^{2}$ with the euclidean 
distance $\displaystyle d$. For any $z \in \mathbb{C}$ of the 
form $z= 1+ i \theta$ we define dilatations 
$$\delta_{\varepsilon} x = \varepsilon^{z} x  \ .$$
It is easy to check that $\displaystyle (\mathbb{R}^{2},d, \delta)$ 
is a dilatation structure, with  dilatations 
$$\delta^{x}_{\varepsilon} y = x + \delta_{\varepsilon} (y-x)  $$

Two such dilatation structures (constructed with the help of complex numbers 
$1+ i \theta$ and $1+ i \theta'$) are equivalent if and only if $\theta = \theta'$.  

There are two other interesting  properties of these dilatation structures. 
The first is that if $\theta \not = 0$ then there are no non trivial 
Lipschitz curves in $X$ which are differentiable almost everywhere. It means
that such dilatation structure does not have the Radon-Nikodym property. 

The second property is that any holomorphic and Lipschitz function from $X$ to $X$ (holomorphic in the 
usual sense on $X = \mathbb{R}^{2} = \mathbb{C}$) is differentiable almost everywhere, but there are 
Lipschitz functions from $X$ to $X$ which are not differentiable almost everywhere (suffices to take a 
$\displaystyle \mathcal{C}^{\infty}$ function from  $\displaystyle \mathbb{R}^{2}$ to $\displaystyle \mathbb{R}^{2}$ which is not holomorphic).

\subsection{Length formula from Radon-Nikodym property}

\begin{definition}
In a normed conical group $N$   we shall denote by $D(N)$ the set of all 
$u\in N$ with the property that $\displaystyle 
\varepsilon \in ((0,\infty),+) \mapsto \delta_{\varepsilon} u \in N$ is a morphism of
groups.
\label{defdisn}
\end{definition}
$D(N)$ is always non empty, because it contains the neutral element of $N$. 
$D(N)$ is also a cone, with dilatations $\displaystyle   \delta_{\varepsilon}$, 
and a closed set.

 \begin{proposition}
 Let $(X,d,\delta)$ be a strong dilatation structure. Then the following are
 equivalent: 
 \begin{enumerate}
 \item[(a)] $(X,d,\delta)$ has the Radon-Nikodym property; 
 \item[(b)] for any  Lipschitz curve 
  $\displaystyle c : [a, b] \rightarrow (X,d)$, 
  for almost every $t \in [a,b]$ there is 
  $\displaystyle \dot{c}(t) \in D(T_{c(t}(X,d,\delta))$ such that 
$$\frac{1}{\varepsilon} d(c(t+\varepsilon) , \delta_{\varepsilon}^{c(t)} \dot{c}(t)) \rightarrow 0 $$
$$\frac{1}{\varepsilon} d(c(t-\varepsilon) , \delta_{\varepsilon}^{c(t)} 
inv^{c(t)}( \dot{c}(t))) \rightarrow 0  $$
\end{enumerate}
\end{proposition}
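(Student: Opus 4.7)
The strategy is to reduce the equivalence of (a) and (b) to a pointwise equivalence: for each $t$, the curve $c$ is derivable at $t$ in the sense of Definition \ref{defdiffer} if and only if there exists $\dot c(t) \in D(T_{c(t)}X)$ satisfying the two limits in (b). Quantifying over almost every $t\in[a,b]$ then yields the proposition.

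The first step is to describe conical group morphisms $L$ from (a neighborhood of $t$ in) $\mathbb{R}$ to $T_{c(t)}X$, where $\mathbb{R}$ carries the standard linear dilatation structure $\delta_\varepsilon^t s = t+\varepsilon(s-t)$ (the first example of Subsection \ref{2ex}), whose tangent at $t$ is $\mathbb{R}$ with additive group and linear dilatations. I claim that such morphisms are in bijection with elements $v\in D(T_{c(t)}X)$: given $v$, set
\[
L_v(t+s)=\delta_s^{c(t)}v\ (s>0),\quad L_v(t)=c(t),\quad L_v(t-s)=inv^{c(t)}\!\bigl(\delta_s^{c(t)}v\bigr)\ (s>0),
\]
and by the very definition of $D(T_{c(t)}X)$, the map $\varepsilon\mapsto \delta_\varepsilon v$ is a group morphism, so $L_v$ is a genuine conical group morphism. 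Conversely, any such $L$ is recovered from $v=L(t+1)$, which necessarily lies in $D(T_{c(t)}X)$.

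With this correspondence at hand, I specialize the sup-condition of Definition \ref{defdiffer} to the curve $c$ with $Dc(t)=L_{\dot c(t)}$ and $u=t+h$. By axiom A2,
\[
\delta_\varepsilon^{c(t)}Dc(t)(t+h)=\delta_\varepsilon^{c(t)}\delta_h^{c(t)}\dot c(t)=\delta_{\varepsilon h}^{c(t)}\dot c(t),
\]
so the sup-condition becomes $\sup_{|h|\leq 1}\tfrac{1}{\varepsilon}\,d(c(t+\varepsilon h),\delta_{\varepsilon h}^{c(t)}\dot c(t))\to 0$. Setting $h=\pm 1$ extracts exactly the two limits of (b) (the negative case uses the extension by $inv^{c(t)}$ built into $L_v$). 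Conversely, if (b) holds, then $g(\eta):=\tfrac{1}{|\eta|}d(c(t+\eta),\delta_\eta^{c(t)}\dot c(t))\to 0$ as $\eta\to 0$ (and the analogous one-sided statement with $inv^{c(t)}$), hence
\[
\sup_{|\eta|\leq\varepsilon}\tfrac{1}{\varepsilon}d\bigl(c(t+\eta),\delta_\eta^{c(t)}\dot c(t)\bigr)\ =\ \sup_{|\eta|\leq\varepsilon}\tfrac{|\eta|}{\varepsilon}\,g(\eta)\ \leq\ \sup_{|\eta|\leq\varepsilon} g(\eta)\longrightarrow 0,
\]
recovering the sup-condition of Definition \ref{defdiffer}.

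The principal obstacle is to check that the prescription $L_v$ is indeed a conical group morphism on a whole neighborhood of $t$, not merely a one-sided one-parameter family. This is exactly what membership $\dot c(t)\in D(T_{c(t)}X)$ buys us: by Definition \ref{defdisn}, $\varepsilon\mapsto\delta_\varepsilon\dot c(t)$ is an additive morphism, so the two half-lines glue at $s=0$ through the inverse in the conical group $T_{c(t)}X$, giving a full morphism compatible with both the group law and dilatations. Once this algebraic check is in place, the translation between (a) and (b) is a routine application of axiom A2 and the uniform convergence provided by axiom A3 (needed to control $d$ by $d^{c(t)}$ when $h$ ranges over a compact set).
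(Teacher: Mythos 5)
Your proof is correct and follows essentially the same route as the paper: both arguments rest on the bijection between conical group morphisms $f:\mathbb{R}\rightarrow T_{c(t)}X$ and elements $v=f(1)$ of $D(T_{c(t)}X)$ (with $f(a)=\delta_{a}v$ for $a>0$ and $f(a)=\delta_{a}\, inv^{c(t)}(v)$ for $a<0$), followed by the use of Axiom A2 to identify derivability of $c$ at $t$ with the two limits of (b). Your explicit rescaling estimate showing that the two one-sided limits recover the sup-form of the differentiability condition is a small amount of extra detail that the paper leaves implicit, but it does not change the approach.
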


\begin{proof}  
 It is  straightforward that a conical group morphism $f: \mathbb{R} \rightarrow N$ is defined by its value $f(1)\in N$. Indeed, for any $a>0$ we have $\displaystyle f(a) = \delta_{a} f(1)$ and for any 
 $a<0$ we have $\displaystyle f(a) = \delta_{a} f(1)^{-1}$. From the morphism property we also 
 deduce that  
 $$\delta v = \left\{ \delta_{a} v \mbox{ : } a>0 , v=f(1) \mbox{ or } v=f(1)^{-1} \right\}$$
 is a one parameter group and that for all $\alpha, \beta >0$ we have 
 $\displaystyle \delta_{\alpha+\beta} u = \delta_{\alpha}u \,  \delta_{\beta}u$. We have
 therefore a bijection between conical group morphisms $f: \mathbb{R} \rightarrow
 (N,\delta)$  and elements of $D(N)$. 
 
 A Lipschitz curve 
 $\displaystyle c : [a,b] \rightarrow (X,d)$ is derivable in $t \in (a,b)$ if 
 and only if there is a morphism of normed conical groups 
 $\displaystyle f: \mathbb{R} \rightarrow T_{c(t}(X,d,\delta)$ such that 
 for any $a \in \mathbb{R}$ we have 
 $$\lim_{\varepsilon \rightarrow 0} \frac{1}{\varepsilon} \, d(c(t+\varepsilon a), 
 \delta^{c(t)}_{\varepsilon} f(a)) \ = \ 0$$
 Take  $\displaystyle \dot{c}(t) = f(1)$. Then $\displaystyle \dot{c}(t) \in 
 D(T_{c(t}(X,d,\delta))$. For any $a > 0$ we have $\displaystyle f(a) =
 \delta^{c(t)}_{a} \dot{c}(t)$; otherwise if $a < 0$ we have 
 $\displaystyle f(a) =
 \delta^{c(t)}_{a} \, inv^{c(t)} \,\dot{c}(t)$. This implies the equivalence 
 stated on the proposition.
 \end{proof}

 \begin{theorem}
Let $(X,d,\delta)$ be a strong dilatation structure with the Radon-Nikodym property, 
over a complete length metric space $(X,d)$. Then   for any $x, y \in X$ we have 
$$d(x,y) \ = \ \inf \left\{ \int_{a}^{b} d^{c(t)}(c(t),\dot{c}(t)) \mbox{ d}t  \mbox{ :
} c:[a,b]\rightarrow X \mbox{ Lipschitz }, \right. $$
$$\left.  c(a) = x , c(b) = y \right\}  $$
\label{fleng}
\end{theorem}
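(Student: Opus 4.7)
The strategy is to reduce the equality to a pointwise identification of the metric derivative of any Lipschitz curve with the integrand $d^{c(t)}(c(t),\dot{c}(t))$, and then invoke that $(X,d)$ is a length space. Since $(X,d)$ is a complete length metric space, definition \ref{dpath} gives $d(x,y) = \inf l_{d}(c)$ over Lipschitz curves $c:[a,b]\to X$ joining $x$ to $y$, so it suffices to prove for every such curve the length formula $l_{d}(c) = \int_{a}^{b} d^{c(t)}(c(t),\dot{c}(t))\,\mathrm{d}t$. Moreover, theorems \ref{t411amb} and \ref{tupper} show that $l_{d}(c)$ equals the variation $Var(c)$ and also $\int_{a}^{b} md(c)(t)\,\mathrm{d}t$, where $md(c)$ is the (a.e.\ defined) metric derivative. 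Hence the problem reduces to checking $md(c)(t) = d^{c(t)}(c(t),\dot{c}(t))$ for almost every $t$.

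Fix $t$ at which the RNP furnishes $\dot{c}(t) \in D(T_{c(t)}X)$. For the right-sided difference quotient, the first RNP relation $\varepsilon^{-1} d(c(t+\varepsilon),\delta^{c(t)}_{\varepsilon}\dot{c}(t)) \to 0$, combined with the triangle inequality and axiom A1 (which gives $\delta^{c(t)}_{\varepsilon}c(t)=c(t)$), yields
\[
\bigl| d(c(t),c(t+\varepsilon)) - d(c(t),\delta^{c(t)}_{\varepsilon}\dot{c}(t)) \bigr| = o(\varepsilon).
\]
Axiom A3 applied at $u=c(t)$ and $v=\dot{c}(t)$ then delivers $\varepsilon^{-1} d(c(t),\delta^{c(t)}_{\varepsilon}\dot{c}(t)) \to d^{c(t)}(c(t),\dot{c}(t))$, so the right difference quotient of $c$ at $t$ converges to $d^{c(t)}(c(t),\dot{c}(t))$. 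The left-sided statement is entirely parallel, using the second RNP relation together with the symmetry of the norm on the tangent conical group, which forces $d^{c(t)}(c(t), inv^{c(t)}(\dot{c}(t))) = d^{c(t)}(c(t),\dot{c}(t))$. The two one-sided limits therefore agree, proving $md(c)(t) = d^{c(t)}(c(t),\dot{c}(t))$ almost everywhere.

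Integrating over $[a,b]$ gives $l_{d}(c) = \int_{a}^{b} d^{c(t)}(c(t),\dot{c}(t))\,\mathrm{d}t$, and taking the infimum over Lipschitz curves from $x$ to $y$ produces the claimed length formula. The main obstacle is the a.e.\ identification of $md(c)(t)$: one must carefully chain the infinitesimal approximation provided by the RNP with the cone limit of axiom A3, and then separately argue that the backward direction gives the same value via the symmetry of $d^{c(t)}$ at the neutral element of the tangent group. The hypothesis that $(X,d)$ is a length space is what allows the integral-of-tangent-norm expression to recover $d(x,y)$ exactly, rather than merely bound it from above.
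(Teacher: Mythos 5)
Your proposal is correct and follows essentially the same route as the paper: reduce to the length-space infimum, use theorem \ref{tupper} to write $l_{d}(c)$ as the integral of the metric derivative, and identify $md(c)(t)$ with $d^{c(t)}(c(t),\dot{c}(t))$ almost everywhere by chaining the Radon--Nikodym approximation with axioms A1 and A3. You in fact supply more detail than the paper does (the triangle-inequality step and the separate treatment of the left-sided quotient via the symmetry of $d^{c(t)}$), where the paper states the identification of $Lip(c)(t)$ with $d^{c(t)}(c(t),\dot{c}(t))$ in one line.
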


\begin{proof}
From theorem \ref{tupper} we deduce that for almost every $t\in(a,b)$ 
the upper dilatation of $c$ in $t$ can be expressed as:
$$Lip(c)(t) = \lim_{s\rightarrow t} \frac{d(c(s),c(t))}{\mid s-t \mid} $$
 
If the dilatation structure has the Radon-Nikodym property then for almost every $t \in [a,b]$ there is 
$\displaystyle \dot{c}(t) \in D(T_{c(t)} X)$ such that 
$$\displaystyle \frac{1}{\varepsilon} d(c(t+\varepsilon) , \delta_{\varepsilon}^{c(t)} \dot{c}(t)) 
\rightarrow 0$$ 
Therefore for almost every $t \in [a,b]$ we have
$$\displaystyle Lip(c)(t) = \lim_{\varepsilon\rightarrow 0} 
\frac{1}{\varepsilon} d(c(t+\varepsilon),c(t)) = d^{c(t)}(c(t),\dot{c}(t))$$ 
The formula for length follows from here. 
\end{proof}

A straightforward consequence is that the distance $d$ is uniquely determined by the 
"distribution" $\displaystyle x \in X \mapsto D(T_{x}(X,d,\delta))$ and the 
function which associates to any $x \in X$ the "norm" $\displaystyle 
\| \cdot \|_{x} : D(T_{x}(X,d,\delta)) \rightarrow [0,+ \infty)$. 
  
 \begin{corollary}
Let $(X,d,\delta)$ and $(X,\bar{d},\bar{\delta})$ be two strong dilatation structures 
with the Radon-Nikodym property , which are also complete length metric spaces, 
such that for any $x \in X$ we have  $\displaystyle D(T_{x}(X,d,\delta)) =
D(T_{x}(X,d',\delta'))$ and $\displaystyle d^{x}(x,u ) = \bar{d}^{x}(x, u)$ for any 
$\displaystyle u \in  D(T_{x}(X,d,\delta))$. Then $d = d'$.
 \end{corollary}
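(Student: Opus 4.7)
The plan is to reduce the problem to a direct comparison of length integrals by invoking Theorem \ref{fleng} on each of the two structures. Applied to $(X,d,\delta)$ and to $(X,\bar{d},\bar{\delta})$ separately, the theorem yields
\[
d(x,y) = \inf_{c} \int_a^b d^{c(t)}(c(t),\dot c(t))\, dt,\qquad
\bar d(x,y) = \inf_{c} \int_a^b \bar d^{c(t)}(c(t),\dot{\bar c}(t))\, dt,
\]
where $\dot c(t) \in D(T_{c(t)}(X,d,\delta))$ and $\dot{\bar c}(t) \in D(T_{c(t)}(X,\bar d,\bar\delta))$ are the respective derivatives of $c$. By hypothesis, the two $D$-cones coincide at every point, and the pointwise norms $d^{c(t)}(c(t),\cdot)$ and $\bar d^{c(t)}(c(t),\cdot)$ agree on this common cone.

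To prove $d \leq \bar d$, I fix $\eta > 0$, pick a $\bar d$-Lipschitz curve $c$ from $x$ to $y$ whose $\bar d$-length is at most $\bar d(x,y)+\eta$, and use the proof of Theorem \ref{fleng} (together with Theorem \ref{tupper}) to identify the $\bar d$-length of $c$ with $\int \bar d^{c(t)}(c(t),\dot{\bar c}(t))\,dt$. Since $\dot{\bar c}(t)$ lies in the common cone, the hypothesis rewrites the integrand as $d^{c(t)}(c(t),\dot{\bar c}(t))$. The reverse inequality is obtained by the symmetric construction.

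The main obstacle is bridging the two length representations: that is, verifying that the curve $c$ chosen as $\bar d$-Lipschitz is also $d$-Lipschitz and that its $d$-metric derivative $Lip_d(c)(t)$ coincides, a.e., with the tangent norm of $\dot{\bar c}(t)$. The key tool here is Theorem \ref{tmit}, which says that on infinitesimal scales $\tfrac{1}{\varepsilon}d$ is approximated by the tangent distance $d^x$ and, analogously, $\tfrac{1}{\varepsilon}\bar d$ is approximated by $\bar d^x$. Along a nearly radial secant $(c(t), c(t+\varepsilon))$ the relevant approximating points lie in the common cone $D(T_{c(t)})$, where the two tangent norms coincide; this forces $Lip_d(c)(t) = Lip_{\bar d}(c)(t)$ almost everywhere, so by Theorem \ref{t411amb} the $d$- and $\bar d$-lengths of $c$ agree.

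Once this identification is in place, the chain
\[
d(x,y) \leq \int_a^b Lip_d(c)(t)\, dt = \int_a^b Lip_{\bar d}(c)(t)\, dt \leq \bar d(x,y) + \eta
\]
together with its symmetric counterpart yields $d = \bar d$ upon letting $\eta \to 0$. The delicate point is the a.e.\ equality of upper dilatations in Step three; everything else is a routine unpacking of Theorem \ref{fleng}.
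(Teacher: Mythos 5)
The paper offers no proof of this corollary at all: it is stated as ``a straightforward consequence'' of Theorem \ref{fleng}, the intended argument being exactly yours --- apply the length formula to each structure and observe that the right-hand side depends only on the common data $\displaystyle x \mapsto D(T_{x})$ and $\displaystyle \| \cdot \|_{x}$. So your overall route coincides with the paper's, and you deserve credit for noticing that the reduction is not as immediate as the paper pretends: the two infima in Theorem \ref{fleng} range over $d$-Lipschitz and $\bar{d}$-Lipschitz curves respectively, and the integrands involve derivatives taken with respect to two different families of dilatations.

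However, your proposed bridge for that gap is where the real difficulty sits, and as written it is circular. To apply Theorem \ref{tmit} for the structure $(X,d,\delta)$ along the secants $(c(t), c(t+\varepsilon))$ of a curve $c$ chosen to be $\bar{d}$-Lipschitz, you need $c(t+\varepsilon) \in \bar{B}_{d}(c(t), C\varepsilon)$, i.e.\ you need $c$ to already be $d$-Lipschitz; likewise the Radon--Nikodym property of $(X,d,\delta)$, which you need to produce $\dot{c}(t) \in D(T_{c(t)}(X,d,\delta))$ and hence $Lip_{d}(c)(t) = d^{c(t)}(c(t),\dot{c}(t))$, is only stated for $d$-Lipschitz curves. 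Nothing in the hypotheses gives an a priori comparison of $d$ and $\bar{d}$ at positive scales (contrast with the last corollary of the paper, where the identity is assumed bilipschitz). A second, smaller issue: the hypothesis $d^{x}(x,u) = \bar{d}^{x}(x,u)$ holds only for $u$ in the cone $D(T_{x})$, while Theorem \ref{tmit} compares $\frac{1}{\varepsilon}d$ with $d^{x}$ on whole balls; your assertion that ``the relevant approximating points lie in the common cone'' needs an argument, since the rescaled secants $\delta^{c(t)}_{\varepsilon^{-1}} c(t+\varepsilon)$ and $\bar{\delta}^{c(t)}_{\varepsilon^{-1}} c(t+\varepsilon)$ only approach the cone in the limit and are formed with two different dilatation families. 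So the plan names the right delicate point but does not close it; to be fair, the paper silently skips the same point.
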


\subsection{Equivalent dilatation structures and their distributions}

\begin{definition}
{\bf Two strong dilatation structures} $(X, \delta , d)$ and $(X,
\overline{\delta} , \overline{d})$  {\bf are equivalent}  if 
\begin{enumerate}
\item[(a)] the identity  map $\displaystyle id: (X, d) \rightarrow (X, \overline{d})$ is bilipschitz and 
\item[(b)]  for any $x \in X$ there are functions $\displaystyle P^{x}, Q^{x}$ (defined for $u \in X$ sufficiently close to $x$) such that  
\begin{equation}
\lim_{\varepsilon \rightarrow 0} \frac{1}{\varepsilon} \overline{d} \left( \delta^{x}_{\varepsilon} u ,  \overline{\delta}^{x}_{\varepsilon} Q^{x} (u) \right)  = 0 , 
\label{dequiva}
\end{equation}
\begin{equation}
 \lim_{\varepsilon \rightarrow 0} \frac{1}{\varepsilon} d \left( \overline{\delta}^{x}_{\varepsilon} u ,  
 \delta^{x}_{\varepsilon} P^{x} (u) \right)  = 0 , 
\label{dequivb}
\end{equation}
uniformly with respect to $x$, $u$ in compact sets. 
\end{enumerate}
\label{dilequi}
\end{definition}

\begin{proposition}
 $(X, \delta , d)$ and $(X, \overline{\delta} , \overline{d})$  are equivalent  if and 
only if 
\begin{enumerate}
\item[(a)] the identity  map $\displaystyle id: (X, d) \rightarrow (X,
\overline{d})$ is bilipschitz, 
\item[(b)]  for any $x \in X$ there are conical group morphisms: 
 $$\displaystyle P^{x}: T_{x}(X, \overline{\delta} , \overline{d}) 
 \rightarrow T_{x} (X, \delta , d) \mbox{ and } \displaystyle  Q^{x}: T_{x} (X, \delta , d) \rightarrow 
 T_{x}(X, \overline{\delta} , \overline{d})$$
  such that the following limits exist  
\begin{equation}
\lim_{\varepsilon \rightarrow 0}  \left(\overline{\delta}^{x}_{\varepsilon}\right)^{-1}  \delta^{x}_{\varepsilon} (u) = Q^{x}(u) , 
\label{dequivap}
\end{equation}
\begin{equation}
 \lim_{\varepsilon \rightarrow 0}  \left(\delta^{x}_{\varepsilon}\right)^{-1}  \overline{\delta}^{x}_{\varepsilon} (u) = P^{x}(u) , 
\label{dequivbp}
\end{equation}
and are uniform with respect to $x$, $u$ in compact sets. 
\end{enumerate}
\label{pdilequi}
\end{proposition}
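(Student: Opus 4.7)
The plan is to translate between the two versions of condition (b) using Axiom A3 for both dilatation structures: A3 asserts that $\frac{1}{\varepsilon}\overline{d}(\overline{\delta}^{x}_{\varepsilon}a,\overline{\delta}^{x}_{\varepsilon}b) \to \overline{d}^{x}(a,b)$ uniformly on compacts, and symmetrically for $d$. Since condition (a) is identical in the two statements, the content of the proposition is the equivalence of (\ref{dequiva})-(\ref{dequivb}) with (\ref{dequivap})-(\ref{dequivbp}) plus the conical group morphism assertion.

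For the direction (b) $\Rightarrow$ (b'), introduce $a_{\varepsilon} = (\overline{\delta}^{x}_{\varepsilon})^{-1}\delta^{x}_{\varepsilon}u$, which by A0 and the bilipschitz assumption stays in a compact set. The identity
\[
\frac{1}{\varepsilon}\overline{d}\!\left(\delta^{x}_{\varepsilon}u,\overline{\delta}^{x}_{\varepsilon}Q^{x}(u)\right) = \frac{1}{\varepsilon}\overline{d}\!\left(\overline{\delta}^{x}_{\varepsilon}a_{\varepsilon},\overline{\delta}^{x}_{\varepsilon}Q^{x}(u)\right)
\]
combined with A3 for $\overline{d}$ shows that the right-hand side differs from $\overline{d}^{x}(a_{\varepsilon},Q^{x}(u))$ by a vanishing term. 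Since the left-hand side tends to $0$ by (\ref{dequiva}), one gets $\overline{d}^{x}(a_{\varepsilon},Q^{x}(u)) \to 0$ uniformly on compacts, whence $a_{\varepsilon} \to Q^{x}(u)$, which is (\ref{dequivap}). The symmetric argument yields (\ref{dequivbp}) from (\ref{dequivb}). That $Q^{x}$ commutes with dilatations follows from A2 alone: using $(\overline{\delta}^{x}_{\varepsilon})^{-1} = \overline{\delta}^{x}_{\varepsilon^{-1}}$ and the semigroup law, rewrite
\[
(\overline{\delta}^{x}_{\varepsilon})^{-1}\delta^{x}_{\varepsilon}\delta^{x}_{\mu}u = \overline{\delta}^{x}_{\mu}\bigl[(\overline{\delta}^{x}_{\varepsilon\mu})^{-1}\delta^{x}_{\varepsilon\mu}u\bigr],
\]
and let $\varepsilon \to 0$ to obtain $Q^{x}(\delta^{x}_{\mu}u) = \overline{\delta}^{x}_{\mu}Q^{x}(u)$. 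The group-morphism property with respect to the tangent group operations defined via A4 is then obtained by passing to the limit in a composition of the approximate translations $\Sigma^{x}_{\varepsilon}$ and $\overline{\Sigma}^{x}_{\varepsilon}$, exploiting their uniform convergence together with (\ref{dequivap})-(\ref{dequivbp}).

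The converse direction reads the previous argument backwards: starting from (\ref{dequivap}), set $a_{\varepsilon} = (\overline{\delta}^{x}_{\varepsilon})^{-1}\delta^{x}_{\varepsilon}u$ so that $a_{\varepsilon} \to Q^{x}(u)$ uniformly, and apply A3 for $\overline{d}$ to conclude
\[
\frac{1}{\varepsilon}\overline{d}\!\left(\delta^{x}_{\varepsilon}u,\overline{\delta}^{x}_{\varepsilon}Q^{x}(u)\right) = \frac{1}{\varepsilon}\overline{d}\!\left(\overline{\delta}^{x}_{\varepsilon}a_{\varepsilon},\overline{\delta}^{x}_{\varepsilon}Q^{x}(u)\right) \longrightarrow \overline{d}^{x}(Q^{x}(u),Q^{x}(u)) = 0,
\]
which is (\ref{dequiva}); the symmetric move produces (\ref{dequivb}). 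I expect the main obstacle to be the group-morphism part of the conical morphism conclusion: dilatation equivariance is immediate from A2, but verifying the compatibility of $Q^{x}$ with the nonlinear tangent group laws $\Sigma^{x}$ and $\overline{\Sigma}^{x}$ requires a careful diagram-chase combining the uniform A4-convergences for both structures with the freshly-established (\ref{dequivap})-(\ref{dequivbp}), and this is the only place where the strong (A4) hypothesis is genuinely used.
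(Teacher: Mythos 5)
The paper states Proposition \ref{pdilequi} without any proof (it is imported from \cite{buligadil1}), so there is nothing to compare your argument against line by line; what follows is an assessment of your reconstruction on its own terms. Your translation between the metric form (\ref{dequiva})--(\ref{dequivb}) and the topological form (\ref{dequivap})--(\ref{dequivbp}) is the right mechanism and is essentially correct: conjugating by $\overline{\delta}^{x}_{\varepsilon}$, invoking A0 and the bilipschitz hypothesis to keep $a_{\varepsilon}=(\overline{\delta}^{x}_{\varepsilon})^{-1}\delta^{x}_{\varepsilon}u$ in a fixed compact ball, and applying A3 to convert $\frac{1}{\varepsilon}\overline{d}(\overline{\delta}^{x}_{\varepsilon}a_{\varepsilon},\overline{\delta}^{x}_{\varepsilon}Q^{x}(u))$ into $\overline{d}^{x}(a_{\varepsilon},Q^{x}(u))$ up to a vanishing error is exactly how such statements are proved in this theory. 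One technical point you pass over silently in the forward direction: $\overline{d}^{x}(a_{\varepsilon},Q^{x}(u))\rightarrow 0$ gives convergence in the tangent distance, and to upgrade this to convergence in the topology of $(X,\overline{d})$ (which is what (\ref{dequivap}) asserts) you need the nondegeneracy of $\overline{d}^{x}$ required by A3 together with local compactness and the continuity of $\overline{d}^{x}$ as a uniform limit of continuous functions; this also affects the uniformity claim. Your A2 computation for dilatation-equivariance of $Q^{x}$ is correct.

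The genuinely incomplete part is the one you yourself flag: the assertion that $P^{x}$ and $Q^{x}$ are conical group \emph{morphisms}, i.e.\ compatible with the tangent group operations $\Sigma^{x}$ and $\overline{\Sigma}^{x}$. You defer this to a ``diagram-chase'' without exhibiting it, and that is the only nontrivial content of (b) beyond the existence of the limits. Note that in the paper's own organization this step is not folded into the proposition at all but is isolated as Theorem \ref{tisoequiv}, which asserts $\overline{\Sigma}^{x}(u,v)=Q^{x}(\Sigma^{x}(P^{x}(u),P^{x}(v)))$; combined with $Q^{x}\circ P^{x}=\mathrm{id}$ (which follows from composing (\ref{dequivap}) and (\ref{dequivbp})), that identity is precisely the morphism property you are missing. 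So your outline is consistent with the paper's development, but to be a complete proof of the proposition as stated you would either have to carry out that limit computation with $\Sigma^{x}_{\varepsilon}$ and $\overline{\Sigma}^{x}_{\varepsilon}$ explicitly, or make clear that you are quoting it from the subsequent theorem.
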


 The next theorem shows a link between the tangent bundles of equivalent dilatation structures. 
 
 \begin{theorem} 
 Let $(X, d, \delta)$ and $(X, \overline{d}, \overline{\delta})$  be  equivalent
 strong  dilatation structures.  Then for any $x \in X$ and 
 any $u,v \in X$ sufficiently close to $x$ we have:
 \begin{equation}
 \overline{\Sigma}^{x}(u,v) = Q^{x} \left( \Sigma^{x} \left( P^{x}(u) , P^{x}(v) \right)\right) . 
 \label{isoequiv}
 \end{equation}
 The two tangent bundles  are therefore isomorphic in a natural sense. 
 \label{tisoequiv}
 \end{theorem}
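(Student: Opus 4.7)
The plan is to express the finite-$\varepsilon$ approximant
$\overline{\Sigma}^{x}_{\varepsilon}(u,v) = \overline{\delta}^{x}_{\varepsilon^{-1}} \overline{\delta}^{\overline{\delta}^{x}_{\varepsilon} u}_{\varepsilon} v$
entirely in terms of the $\delta$-dilatations together with the conical group morphisms $P^{x}, Q^{x}$, and then to pass to the limit $\varepsilon \to 0$ using Axiom A4+ for both dilatation structures. The principal tools are the characterization of equivalence in Proposition~\ref{pdilequi} and the cone/scaling estimates provided by Axiom A3.

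From Proposition~\ref{pdilequi} I extract two asymptotic substitution rules, uniform on compact sets: the relation $\overline{\delta}^{y}_{\varepsilon} \approx \delta^{y}_{\varepsilon} \circ P^{y}$ (in the sense of (\ref{dequivb})), and its dual $\overline{\delta}^{y}_{\varepsilon^{-1}} \approx Q^{y} \circ \delta^{y}_{\varepsilon^{-1}}$ obtained by inverting (\ref{dequivap}). Setting $y_{\varepsilon} = \overline{\delta}^{x}_{\varepsilon} u$ (so $y_{\varepsilon} \to x$), I substitute in three layers: first, the innermost $y_{\varepsilon}$ is replaced by $\delta^{x}_{\varepsilon} P^{x}(u)$ via (\ref{dequivb}); second, the middle dilatation is rewritten
\[
\overline{\delta}^{y_{\varepsilon}}_{\varepsilon} v \;\approx\; \delta^{y_{\varepsilon}}_{\varepsilon} P^{y_{\varepsilon}}(v) \;\approx\; \delta^{y_{\varepsilon}}_{\varepsilon} P^{x}(v),
\]
the last step by continuity of $y \mapsto P^{y}(v)$ at $x$; third, the outer dilatation is replaced by $\overline{\delta}^{x}_{\varepsilon^{-1}} \approx Q^{x} \circ \delta^{x}_{\varepsilon^{-1}}$. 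Assembled, these substitutions yield
\[
\overline{\Sigma}^{x}_{\varepsilon}(u,v) \;\approx\; Q^{x}\!\bigl( \delta^{x}_{\varepsilon^{-1}} \delta^{\delta^{x}_{\varepsilon} P^{x}(u)}_{\varepsilon} P^{x}(v) \bigr) \;=\; Q^{x}\!\bigl( \Sigma^{x}_{\varepsilon}(P^{x}(u), P^{x}(v)) \bigr),
\]
and letting $\varepsilon \to 0$, A4+ on the left gives $\overline{\Sigma}^{x}(u,v)$ while A4+ on the right (composed with the continuous $Q^{x}$) gives $Q^{x}(\Sigma^{x}(P^{x}(u), P^{x}(v)))$, establishing (\ref{isoequiv}). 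The ``natural isomorphism'' of tangent bundles follows because $Q^{x}$ (with inverse $P^{x}$) intertwines $\Sigma^{x}$ with $\overline{\Sigma}^{x}$ and likewise intertwines the respective dilatations.

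The main obstacle is tracking how the $o(\varepsilon)$-errors introduced at each substitution survive the outer ``amplification'' $\overline{\delta}^{x}_{\varepsilon^{-1}}$, which dilates distances by a factor of order $1/\varepsilon$. The key estimate is that whenever $a_{\varepsilon}, b_{\varepsilon}$ are near $x$ with $\overline{d}(a_{\varepsilon}, b_{\varepsilon}) = o(\varepsilon)$, one has $\overline{d}(\overline{\delta}^{x}_{\varepsilon^{-1}} a_{\varepsilon}, \overline{\delta}^{x}_{\varepsilon^{-1}} b_{\varepsilon}) \to 0$; this is exactly what A3 for $(X, \overline{d}, \overline{\delta})$ delivers, and it transfers between the $d$- and $\overline{d}$-formulations via the bilipschitz equivalence of $d$ and $\overline{d}$ from clause (a) of Definition~\ref{dilequi}. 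A subordinate technical point is the continuity of $y \mapsto P^{y}(v)$ near $x$, which I would derive from the joint uniformity of the convergence in (\ref{dequivap}) together with continuity (in $y, w$) of the prelimit expressions $\overline{\delta}^{y}_{\varepsilon^{-1}} \delta^{y}_{\varepsilon}(w)$ supplied by A1.
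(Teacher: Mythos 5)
The paper states Theorem \ref{tisoequiv} without proof (the argument lives in the cited reference \cite{buligadil2}), so there is nothing in-text to compare against; judged on its own terms, your three-layer substitution scheme is the natural route, and the final limit passage via A4+ together with the uniform convergence in (\ref{dequivap}) is sound. You also correctly isolate two of the three technical points: the survival of $o(\varepsilon)$-errors under the outer map $\overline{\delta}^{x}_{\varepsilon^{-1}}$ (which A3 does control, since it concerns \emph{one} dilatation applied to two nearby points), and the continuity of $y \mapsto P^{y}(v)$ as a uniform limit of continuous functions.

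There is, however, a genuine gap in your first substitution. You replace the base point $y_{\varepsilon} = \overline{\delta}^{x}_{\varepsilon} u$ of the middle dilatation by $z_{\varepsilon} = \delta^{x}_{\varepsilon} P^{x}(u)$, citing only (\ref{dequivb}), which yields $d(y_{\varepsilon}, z_{\varepsilon}) = o(\varepsilon)$. But closeness of the base points does not by itself give $d\left(\delta^{y_{\varepsilon}}_{\varepsilon} w, \delta^{z_{\varepsilon}}_{\varepsilon} w\right) = o(\varepsilon)$ for fixed $w$: no axiom asserts directly that $\delta^{y}_{\varepsilon}$ depends on the base point $y$ in this quantitatively stable way, and your stated key estimate from A3 addresses a different configuration (same base point, two arguments) than the one needed here (two base points, same argument). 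The missing lemma is true for strong dilatation structures but must be supplied: setting $w_{\varepsilon} = \delta^{y_{\varepsilon}}_{\varepsilon^{-1}} z_{\varepsilon}$, the hypothesis $d(y_{\varepsilon}, z_{\varepsilon}) = o(\varepsilon)$ together with A3 forces $w_{\varepsilon} \rightarrow x$, one has the identity $\delta^{z_{\varepsilon}}_{\varepsilon} w = \delta^{y_{\varepsilon}}_{\varepsilon} \Sigma^{y_{\varepsilon}}_{\varepsilon}(w_{\varepsilon}, w)$, and therefore $\frac{1}{\varepsilon} d\left(\delta^{y_{\varepsilon}}_{\varepsilon} w, \delta^{z_{\varepsilon}}_{\varepsilon} w\right) = d^{y_{\varepsilon}}\left(w, \Sigma^{y_{\varepsilon}}_{\varepsilon}(w_{\varepsilon}, w)\right) + o(1) \rightarrow d^{x}\left(w, \Sigma^{x}(x,w)\right) = 0$, using the uniformity in A4+ and the fact that $\Sigma^{x}(x,\cdot)$ is the identity. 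Without this step (or an equivalent appeal to A4) your chain of approximations does not close; with it, the rest of your argument goes through.
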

 As a consequence, the following corollary is straightforward. 
 
\begin{corollary}
Let  $(X, d, \delta)$ and $(X, \overline{d}, \overline{\delta})$  be  
equivalent strong dilatation structures. Then for any $x \in X$ we have 
$$\displaystyle Q^{x} (D(T_{x}(X, \delta , d))) \ = \ D(T_{x}(X, \overline{\delta} ,
\overline{d}))  $$

If  $(X, d, \delta)$ has the Radon-Nikodym property , then 
$(X, \overline{d}, \overline{\delta})$ has the same property. 

Suppose that $(X, d, \delta)$ and $(X, \overline{d}, \overline{\delta})$  are 
complete length spaces with the Radon-Nikodym property . If the functions 
$\displaystyle P^{x}, Q^{x}$ from definition \ref{dilequi} (b) are isometries, 
then $\displaystyle d = \overline{d}$. 
\end{corollary}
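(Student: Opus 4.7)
The plan is to handle the three assertions in sequence, using the conical-group-morphism structure of $P^x, Q^x$ established in Proposition \ref{pdilequi} for the first two, and the length formula of Theorem \ref{fleng} for the third. For the first assertion, $Q^x(D(T_{x}(X, \delta , d))) = D(T_{x}(X, \overline{\delta} , \overline{d}))$, I would take $u \in D(T_{x}(X,\delta,d))$ and exploit that $Q^x$ both intertwines dilatations ($Q^x \circ \delta_{\varepsilon} = \overline{\delta}_{\varepsilon} \circ Q^x$) and is a group morphism; the computation
$$\overline{\delta}_{\alpha+\beta} Q^x(u) = Q^x(\delta_{\alpha+\beta} u) = Q^x(\delta_\alpha u \cdot \delta_\beta u) = \overline{\delta}_\alpha Q^x(u) \cdot \overline{\delta}_\beta Q^x(u)$$
then shows $Q^x(u) \in D(T_{x}(X,\overline{\delta},\overline{d}))$. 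The reverse inclusion follows symmetrically using $P^x$, which is an inverse to $Q^x$ since the limits in Proposition \ref{pdilequi} compose to the identity.

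For the preservation of the Radon--Nikodym property, let $c : [a,b] \to X$ be $\overline{d}$-Lipschitz. Bilipschitzness of the identity makes $c$ also $d$-Lipschitz, so by the RNP of $(X,d,\delta)$ there is $\dot{c}(t) \in D(T_{c(t)}(X,\delta,d))$ for almost every $t$. The natural candidate for the $\overline{d}$-derivative is $\overline{\dot{c}}(t) := Q^{c(t)}(\dot{c}(t))$, which lies in $D(T_{c(t)}(X,\overline{\delta},\overline{d}))$ by the first assertion. Derivability in the new structure follows from the triangle inequality
$$\frac{1}{\varepsilon}\overline{d}\bigl(c(t+\varepsilon), \overline{\delta}^{c(t)}_{\varepsilon} \overline{\dot{c}}(t)\bigr) \leq \frac{1}{\varepsilon}\overline{d}\bigl(c(t+\varepsilon), \delta^{c(t)}_{\varepsilon} \dot{c}(t)\bigr) + \frac{1}{\varepsilon}\overline{d}\bigl(\delta^{c(t)}_{\varepsilon} \dot{c}(t), \overline{\delta}^{c(t)}_{\varepsilon} Q^{c(t)}(\dot{c}(t))\bigr)$$
whose first summand vanishes by the $d$-derivability combined with the bilipschitz comparison of $d$ and $\overline{d}$, and whose second summand vanishes by (\ref{dequiva}). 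The case of $t - \varepsilon$ is handled analogously using $inv^{c(t)}$.

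For the third claim, the strategy is to invoke Theorem \ref{fleng} for both structures. A curve is $d$-Lipschitz if and only if it is $\overline{d}$-Lipschitz, and by the previous step its derivatives in the two structures along a shared curve are related by $\overline{\dot{c}}(t) = Q^{c(t)}(\dot{c}(t))$. If $P^x, Q^x$ are isometries between the pointed tangent cones, then, noting that $Q^x$ fixes the base point (being a group morphism it sends the neutral element of $T_{c(t)}(X,\delta,d)$ to that of $T_{c(t)}(X,\overline{\delta},\overline{d})$, and both are identified with $c(t)$), we have $\overline{d}^{c(t)}(c(t), \overline{\dot{c}}(t)) = d^{c(t)}(c(t), \dot{c}(t))$. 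Hence the length integrals in Theorem \ref{fleng} coincide along every admissible curve and the infima match, yielding $d = \overline{d}$. I expect the main obstacle to lie in the second step, where the precise coupling between $\dot{c}(t)$ and $\overline{\dot{c}}(t)$ must be established; however, the displayed triangle estimate together with the uniform-on-compacts convergence built into the definition of equivalence should make this routine.
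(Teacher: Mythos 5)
Your proof is correct and takes essentially the approach the paper intends: the corollary is stated there as a ``straightforward'' consequence of Theorem \ref{tisoequiv} with no written proof, and your argument supplies the missing details using precisely the conical-group-morphism property of $P^{x}, Q^{x}$ (Proposition \ref{pdilequi}) together with Theorem \ref{fleng}. The only step you should make explicit is that $P^{x}$ and $Q^{x}$ are mutually inverse (needed for your reverse inclusion), which follows from composing the uniform limits (\ref{dequivap}) and (\ref{dequivbp}) but is not stated anywhere in the paper.
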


\section{Tempered dilatation structures}

The notion of a tempered dilatation structure is inspired by 
the results from Venturini \cite{venturini} and Buttazzo, De Pascale and 
Fragal\`a \cite{buttazzo1}. 

The examples of length dilatation structures from this section are provided 
by the extension of some results from \cite{buttazzo1} (propositions 2.3, 2.6 
and a part of theorem 3.1) to dilatation 
structures.

We recall some definition2.1 from \cite{buttazzo1} section 2. Let $\Omega$ be a 
given connected open subset of $\displaystyle \mathbb{R}^{N}$ endowed with 
 the  distance induced by the euclidean norm. Given two positive constants
  $c < C$, let $ D(\Omega)$ be the class of all length distances on $\Omega$ 
  such that 
 \begin{equation}
c \|u - v\| \ \leq  \, d(u, v) \, \leq \,  C \|u - v\| 
\label{but1}
\end{equation}
for all $u, v \in \Omega$. We suppose that $D(\Omega$ is not empty. $D(\Omega)$
is endowed with the topology of uniform convergence on compact subsets of
$\Omega \times \Omega$. 

To any $d \in D(\Omega)$ is associated the function 
$$\phi_{d}(x, u) \, = \, \limsup_{\varepsilon \rightarrow 0}
\frac{1}{\varepsilon} \, d(x, x + \varepsilon u) $$
This function is measurable in $x$ and convex positively one-homogeneous in $z$.
From (\ref{but1}) we see that $\displaystyle \phi_{d}$ has the property: 
\begin{equation}
c \|z\| \ \leq  \, \phi_{d}(x,z) \, \leq \,  C \|z\| 
\label{but2}
\end{equation}
By proposition 2.4 \cite{buttazzo1} the function $\displaystyle \phi_{d}$ allows
to write in an integral form he length functional $\displaystyle l_{d}$
associated to $d$: for any lipschitz curve $c: [0,1] \rightarrow \Omega$ we have
$$l_{d}(c) \, = \, \int_{0}^{1} \phi_{d}(c(t), \dot{c}(t)) \mbox{ d}t$$
With the first example of a dilatation structure with the Radon-Nikodym in mind 
(see subsection \ref{2ex}), we can easily rewrite this in terms of dilatation
structures. Indeed, it suffices to replace $\mathbb{R}^{N}$ with the euclidean
distance by a metric space $(X,\bar{d})$ endowed with a dilatation structure. 
Then we may choose to see the $\displaystyle \|z\|$ from relation 
(\ref{but2}) as the distance $\displaystyle \bar{d}^{x}(u,z)$. Finally, instead
of (\ref{but1}), (\ref{but2}), we may write $d \in mathcal{D}(\Omega,
\bar{d}, \bar{\delta})$ if 
$$c \, \bar{d}^{x}(u,v) \, \leq \, \frac{1}{\varepsilon} \,
d(\bar{\delta}^{x}_{\varepsilon} u , \bar{\delta}^{x}_{\varepsilon} v ) \, \leq 
\, C \, \bar{d}^{x}(u,v) $$
and $\displaystyle \phi_{d}$ could also be rewritten as: 
$$\phi_{d}(x, u) \, = \, \limsup_{\varepsilon \rightarrow 0}
\frac{1}{\varepsilon} \, d(x, \delta^{x}_{\varepsilon} u ) $$
The euclidean distance, or the distance $\displaystyle \bar{d}$ is 
here fixed, and the class $D(\Omega)$ is defined relatively to $\displaystyle 
\bar{d}$. Remark that for $\displaystyle \bar{d}$ being the euclidean distance 
in $\displaystyle X = \mathbb{R}^{N}$, it is true that $\displaystyle 
\bar{d} \in \mathcal{D}(\Omega, \bar{d}, \bar{\delta})$. This inspired us to call
such dilatation structures "tempered".

The construction is presented further in detail.  
The following definition gives a class of distances $\mathcal{D}(\Omega,
\bar{d}, \bar{\delta})$, associated to a strong dilatation structure 
$(\Omega, \bar{d}, \bar{\delta})$, which in some sense generalizes the class of distances 
$\mathcal{D}(\Omega)$ from \cite{buttazzo1}, definition 2.1. 

\begin{definition}
For any strong dilatation structure $(\Omega, \bar{d}, \bar{\delta})$   and
constants $0 < c < C$ we define the class $\mathcal{D}(\Omega,
\bar{d}, \bar{\delta})$ of all distance functions $d$ on $\Omega$ such that 
\begin{enumerate}
\item[(a)] $d$ is a length distance, 
\item[(b)] for any $\varepsilon > 0$ and any $x, u, v$ sufficiently 
closed we have: 
\begin{equation}
c \, \bar{d}^{x}(u,v) \, \leq \, \frac{1}{\varepsilon} \,
d(\bar{\delta}^{x}_{\varepsilon} u , \bar{\delta}^{x}_{\varepsilon} v ) \, \leq 
\, C \, \bar{d}^{x}(u,v) 
\label{new2.3}
\end{equation}
\end{enumerate}
The dilatation structure $(\Omega, \bar{d}, \bar{\delta})$ is {\bf tempered} if 
there are constants $c, C$ such that $\bar{d} \in \mathcal{D}(\Omega,
\bar{d}, \bar{\delta})$. 

On $\mathcal{D}(\Omega, \bar{d}, \bar{\delta})$ we put the topology of uniform
convergence (induced by distance $\bar{d}$) on compact subsets of $\Omega \times
\Omega$. 
\label{dtempered}
\end{definition}

To any distance $d \in \mathcal{D}(\Omega, \bar{d}, \bar{\delta})$ we associate
the function: 
$$\phi_{d}(x, u) \, = \, \limsup_{\varepsilon \rightarrow 0}
\frac{1}{\varepsilon} \, d(x, \delta^{x}_{\varepsilon} u ) $$
defined for any $x, u \in \Omega$ sufficiently close. We have therefore 
\begin{equation}
c \, \bar{d}^{x}(x, u) \, \leq \, \phi_{d}(x,u) \, \leq \, C \, \bar{d}^{x}(x,u)
\label{new2.6}
\end{equation}

Notice that if $d \in \mathcal{D}(\Omega, \bar{d}, \bar{\delta})$ then for any 
$x, u, v$ sufficiently close we have 
$$- \bar{d}(x,u) \, O(\bar{d}(x,u)) \, + \, c \,  \bar{d}^{x}(u,v) \, \leq $$
$$ \leq 
\, d(u,v) \, \leq \,   \, C \, \bar{d}^{x}(u,v) \, + \, 
 \bar{d}(x,u) \, O(\bar{d}(x,u))$$

If $c: [0,1] \rightarrow \Omega$ is a $d$-Lipschitz curve and 
$d \in \mathcal{D}(\Omega, \bar{d}, \bar{\delta})$ then we may decompose it 
in a finite family of curves $\displaystyle c_{1}, ... , c_{n}$ (with $n$ depending on $c$) 
such that there are $\displaystyle x_{1}, ... , x_{n} \in \Omega$ with 
$\displaystyle c_{k} $ is $\displaystyle \bar{d}^{x_{k}}$-Lipschitz. Indeed, the image of the 
curve $c([0,1])$ is compact, therefore we may cover it with a finite number of
balls $\displaystyle B(c(t_{k}), \rho_{k}, \bar{d}^{c(t_{k})})$ and apply 
(\ref{new2.3}). If moreover $(\Omega, \bar{d}, \bar{\delta})$ is tempered then 
it follows that $c: [0,1] \rightarrow \Omega$  $d$-Lipschitz curve is equivalent
with $c$ $\bar{d}$-Lipschitz curve. 

By using the same arguments as in the proof of theorem \ref{fleng}, we get the
following extension of proposition 2.4 \cite{buttazzo1}. 

\begin{proposition}
If $(\Omega, \bar{d}, \bar{\delta})$ is tempered, with the Radon-Nikodym 
property, and $d \in \mathcal{D}(\Omega, \bar{d}, \bar{\delta})$ then 
$$d(x,y) \ = \ \inf \left\{ \int_{a}^{b} \phi_{d}(c(t),\dot{c}(t)) \mbox{ d}t  \mbox{ :
} c:[a,b]\rightarrow X \mbox{ $\bar{d}$-Lipschitz }, \right. $$
$$\left.  c(a) = x , c(b) = y \right\}  $$
\label{new2.4}
\end{proposition}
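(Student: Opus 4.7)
The plan is to follow the strategy of Theorem \ref{fleng}, with the distance $d$ playing the role previously played by $d$ there, but with the dilatation structure $\bar{\delta}$ (and its associated Radon--Nikodym differentiation) providing the ``dot'' $\dot{c}(t)$. Since $d$ is a length distance by condition (a) of Definition \ref{dtempered}, we may write
\begin{equation*}
d(x,y) \, = \, \inf \left\{ l_{d}(c) \, : \, c:[a,b] \to \Omega \text{ is $d$-Lipschitz}, \, c(a)=x, \, c(b)=y \right\},
\end{equation*}
so it suffices to express $l_{d}(c)$ as $\int_{a}^{b} \phi_{d}(c(t),\dot{c}(t)) \, dt$ for a suitable class of curves, and then to show that the infimum over $d$-Lipschitz curves coincides with the infimum over $\bar{d}$-Lipschitz curves.

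First I would invoke the observation already made in the excerpt: because $(\Omega,\bar{d},\bar{\delta})$ is tempered, the covering-and-two-sided-estimate argument (using inequality (\ref{new2.3}) locally along a compact curve) shows that a curve is $d$-Lipschitz if and only if it is $\bar{d}$-Lipschitz. In particular the infimum over $d$-Lipschitz curves in the length formula equals the infimum over $\bar{d}$-Lipschitz curves. Next, fix a $\bar{d}$-Lipschitz curve $c:[a,b]\to \Omega$; the Radon--Nikodym property supplies, for a.e.\ $t$, an element $\dot{c}(t) \in D(T_{c(t)}(\Omega,\bar{d},\bar{\delta}))$ satisfying
\begin{equation*}
\frac{1}{\varepsilon}\,\bar{d}\bigl(c(t+\varepsilon),\, \bar{\delta}_{\varepsilon}^{c(t)} \dot{c}(t)\bigr) \, \to \, 0.
\end{equation*}

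The central step is to show that for a.e.\ $t$ the metric $d$-derivative of $c$ equals $\phi_{d}(c(t),\dot{c}(t))$. For this I would combine the two-sided bound of Definition \ref{dtempered}(b) applied to the pair $(c(t+\varepsilon), \bar{\delta}^{c(t)}_{\varepsilon}\dot{c}(t))$ with the Radon--Nikodym approximation above to obtain
\begin{equation*}
\frac{1}{\varepsilon}\,d(c(t+\varepsilon),c(t)) \, = \, \frac{1}{\varepsilon}\,d\bigl(c(t),\,\bar{\delta}^{c(t)}_{\varepsilon}\dot{c}(t)\bigr) \, + \, o(1),
\end{equation*}
by a triangle-inequality argument analogous to the one in the proof of Theorem \ref{fleng}; passing to the $\limsup$ and recognising the right-hand side as $\phi_{d}(c(t),\dot{c}(t))$ gives the desired identification $Lip_{d}(c)(t) = \phi_{d}(c(t),\dot{c}(t))$ for a.e.\ $t$. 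Integrating from $a$ to $b$ yields $l_{d}(c) = \int_{a}^{b} \phi_{d}(c(t),\dot{c}(t))\, dt$, and taking the infimum over $\bar{d}$-Lipschitz curves finishes the proof.

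The main obstacle I expect is the identification step: the bound (\ref{new2.6}) only gives $\phi_{d}$ sandwiched between constant multiples of $\bar{d}^{x}$, so the equality $Lip_{d}(c)(t)=\phi_{d}(c(t),\dot{c}(t))$ cannot be read off directly; it requires genuinely combining the RNP approximation (which is written in terms of $\bar{d}$) with the comparison of $d$ and $\bar{d}$ on infinitesimal scales, using the ``sufficiently close'' local form of (\ref{new2.3}). Everything else---the reduction to length-minimising sequences of $\bar{d}$-Lipschitz curves, and the integral representation of $l_{d}$ once the pointwise identity is known---is a direct transcription of the argument already given for Theorem \ref{fleng}.
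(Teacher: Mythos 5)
Your proposal is correct and follows essentially the same route as the paper, which in fact gives no separate argument here but simply states that Proposition \ref{new2.4} is obtained ``by using the same arguments as in the proof of theorem \ref{fleng}''. You have correctly identified, and addressed, the one genuinely new ingredient in that adaptation, namely converting the Radon--Nikodym approximation (stated in terms of $\bar{d}$) into the identification $Lip_{d}(c)(t)=\phi_{d}(c(t),\dot{c}(t))$ via the local two-sided comparison (\ref{new2.3}).
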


The next theorem is a generalization of the implication (i) $\Rightarrow$ (iii), theorem 
 3.1 \cite{buttazzo1}.

\begin{theorem}
Let $(\Omega, \bar{d}, \bar{\delta})$ be a strong dilatation structure which is 
tempered, with the Radon-Nikodym 
property, and $\displaystyle d_{n} \in \mathcal{D}(\Omega, \bar{d}, \bar{\delta})$ 
 a sequence  of distances converging to $d \in \mathcal{D}(\Omega, \bar{d},
 \bar{\delta})$. Denote by $\displaystyle L_{n}, L$ the length functional induced
 by the distance $\displaystyle d_{n}$, respectively by $d$. 
 Then $\displaystyle L_{n}$ $\Gamma$-converges to $L$. 
 \label{new3.1}
 \end{theorem}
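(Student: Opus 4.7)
The plan is to reduce the problem to the integral representation of length functionals provided by Proposition \ref{new2.4} and to verify the two halves of the definition of Gamma-convergence separately, taking Gamma-convergence with respect to uniform convergence of curves. Throughout, since $(\Omega, \bar{d}, \bar{\delta})$ is tempered and $d_{n}, d \in \mathcal{D}(\Omega, \bar{d}, \bar{\delta})$, the families of $d_{n}$-Lipschitz, $d$-Lipschitz and $\bar{d}$-Lipschitz curves coincide, with uniformly comparable Lipschitz constants governed by the fixed $c, C$ of Definition \ref{dtempered}.

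For the liminf inequality, I would take $c_{n} \to c$ uniformly on $[a,b]$ and use the fact that each $d_{n}$ is a length distance, so by Theorem \ref{t411amb} one has the variational representation $L_{n}(c_{n}) = \sup_{P} \sum_{i} d_{n}(c_{n}(t_{i}), c_{n}(t_{i+1}))$. For any fixed partition $P : a = t_{0} < \dots < t_{k} = b$, uniform convergence $d_{n} \to d$ on compact subsets of $\Omega \times \Omega$ combined with the uniform convergence $c_{n} \to c$ gives
\[
\sum_{i} d(c(t_{i}), c(t_{i+1})) = \lim_{n \to \infty} \sum_{i} d_{n}(c_{n}(t_{i}), c_{n}(t_{i+1})) \leq \liminf_{n \to \infty} L_{n}(c_{n}).
\]
Passing to the supremum over $P$ produces $L(c) \leq \liminf_{n} L_{n}(c_{n})$.

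For the recovery sequence, the natural candidate is the constant sequence $c_{n} = c$ for any $\bar{d}$-Lipschitz curve $c$ (if $c$ is not $\bar{d}$-Lipschitz then $L(c) = +\infty$ and the liminf bound applied to the constant sequence already yields the limit). By Proposition \ref{new2.4},
\[
L_{n}(c) - L(c) = \int_{a}^{b} \bigl[ \phi_{d_{n}}(c(t), \dot{c}(t)) - \phi_{d}(c(t), \dot{c}(t)) \bigr] \, dt,
\]
and the bounds (\ref{new2.6}) provide the integrable majorant $t \mapsto C\, \bar{d}^{c(t)}(c(t), \dot{c}(t))$ (which is integrable thanks to the Radon-Nikodym property and Theorem \ref{tupper}). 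The dominated convergence theorem then reduces the problem to the pointwise convergence $\phi_{d_{n}}(x, u) \to \phi_{d}(x, u)$ at a.e.\ pair $(c(t), \dot{c}(t))$ along the curve. Using uniform convergence $d_{n} \to d$ on compact sets to control the difference at each fixed $\varepsilon$ and then taking the limsup in $\varepsilon$, one arrives at an estimate of the shape
\[
|\phi_{d_{n}}(x, u) - \phi_{d}(x, u)| \leq \limsup_{\varepsilon \to 0} \frac{1}{\varepsilon} \bigl| d_{n}(x, \delta^{x}_{\varepsilon} u) - d(x, \delta^{x}_{\varepsilon} u) \bigr|.
\]

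The main obstacle is the justification of the exchange of $\limsup_{\varepsilon \to 0}$ with $\lim_{n \to \infty}$ in this last display: mere uniform convergence of $d_{n}$ on compact sets does not automatically propagate to uniform convergence of the rescaled differences $\frac{1}{\varepsilon}(d_{n} - d)$. The hypothesis that $(\Omega, \bar{d}, \bar{\delta})$ is tempered, together with the two-sided bound (\ref{new2.3}), is the right structural tool: it traps every rescaled distance $\frac{1}{\varepsilon} d_{n}(\bar{\delta}^{x}_{\varepsilon} \cdot, \bar{\delta}^{x}_{\varepsilon} \cdot)$ between $c\, \bar{d}^{x}$ and $C\, \bar{d}^{x}$, yielding equicontinuity in $(x, u, \varepsilon)$ uniform in $n$ on compact sets. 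Once that equicontinuity is in place, the exchange of limits is legitimate and the argument closes along the lines of the Euclidean proof in \cite{buttazzo1}, Theorem 3.1.
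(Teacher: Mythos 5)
Your liminf half is sound and is essentially the argument the paper relies on (the paper's own ``proof'' just defers to \cite{buttazzo1}, p.~252--253, with $|x-y|$ replaced by $\bar{d}(x,y)$ and Proposition \ref{new2.4}, (\ref{new2.6}), (\ref{new2.3}) substituted for their Euclidean counterparts): since each $d_{n}$ is a length distance, $L_{n}=Var_{d_{n}}$ by Theorem \ref{t411amb}, fixed partitions pass to the limit under uniform convergence of $d_{n}$ and of $c_{n}$, and the supremum over partitions is taken last.

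The recovery-sequence half has a genuine gap, and it is exactly the one you flag: the exchange of $\limsup_{\varepsilon\rightarrow 0}$ with $\lim_{n\rightarrow\infty}$ cannot be rescued by temperedness. The bound (\ref{new2.3}) traps every rescaled distance between $c\,\bar{d}^{x}$ and $C\,\bar{d}^{x}$, which gives uniform boundedness and equicontinuity in $(u,v)$ at each \emph{fixed} $\varepsilon$, but says nothing about how fast the quotient $\frac{1}{\varepsilon}d_{n}(x,\delta^{x}_{\varepsilon}u)$ approaches its $\limsup$ as $\varepsilon\rightarrow 0$; that rate can degenerate with $n$, and the error term $\frac{1}{\varepsilon}\sup|d_{n}-d|$ blows up for fixed $n$ as $\varepsilon\rightarrow 0$. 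In fact the map $d\mapsto\phi_{d}$ is \emph{not} continuous (not even upper semicontinuous pointwise) with respect to uniform convergence on compact sets --- this failure is precisely why \cite{buttazzo1} formulate the result as $\Gamma$-convergence of the length functionals rather than as convergence of the integrands --- so the constant sequence $c_{n}=c$ together with dominated convergence is the wrong route. The argument that actually closes (and is the one the paper imports) builds a genuine recovery sequence: choose a fine partition $a=t_{0}<\dots<t_{k}=b$ so that $\sum_{i}d(c(t_{i}),c(t_{i+1}))$ is within $\eta$ of $L(c)$, replace each arc by a $d_{n}$-geodesic (or near-geodesic) with the same endpoints --- available because each $d_{n}$ is a length distance --- obtaining curves $c_{n}$ with $L_{n}(c_{n})$ close to $\sum_{i}d_{n}(c(t_{i}),c(t_{i+1}))\rightarrow\sum_{i}d(c(t_{i}),c(t_{i+1}))\leq L(c)$, while (\ref{new2.3}) and temperedness force $c_{n}\rightarrow c$ uniformly since all the $d_{n}$ are uniformly comparable to $\bar{d}$ on compact sets. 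Combined with your liminf inequality this yields $\lim_{n}L_{n}(c_{n})=L(c)$.
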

 
 \begin{proof}
 The proof (\cite{buttazzo1} p. 252-253) is almost identical, we only need to 
replace everywhere expressions like  $\mid x - y\mid$ by $\bar{d}(x,y)$ and 
use proposition \ref{new2.4}, relations (\ref{new2.6}) and (\ref{new2.3})
instead of respectively proposition 2.4 and relations (2.6) and (2.3)
\cite{buttazzo1}. 
 \end{proof}

Using this result we obtain a large class of examples of length dilatation
structures.

\begin{corollary}
If  $(\Omega, \bar{d}, \bar{\delta})$ is a strong dilatation structure which is 
tempered and it has  the Radon-Nikodym property then it is a length dilatation structure. 
\label{cortemp}
\end{corollary}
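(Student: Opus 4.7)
The plan is to observe that almost every axiom of a length dilatation structure is already given or immediate. Since $(\Omega, \bar d, \bar\delta)$ is a strong dilatation structure, axioms A0--A4 hold automatically, and the remark following Definition \ref{deflds} promotes A4 to A4+ in the strong setting. The only remaining condition is A3L: the $\Gamma$-convergence, along any sequence $\varepsilon_n \to 0$, of the length functionals $l_{\varepsilon_n}$ to some limit $l$.

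The central reinterpretation is that, for each fixed base point $x$, the map
\[
c \mapsto l^{x}_{\varepsilon}(c) \,=\, \frac{1}{\varepsilon}\, l_{\bar d}(\bar\delta^{x}_{\varepsilon}c)
\]
is precisely the length functional on $U(x)$ induced by the pulled-back and rescaled distance
\[
d^{x}_{\varepsilon}(u, v) \,=\, \frac{1}{\varepsilon}\, \bar d\bigl(\bar\delta^{x}_{\varepsilon} u,\, \bar\delta^{x}_{\varepsilon} v\bigr),
\]
an identification that is immediate because $\bar\delta^{x}_{\varepsilon}$ is a homeomorphism. The temperedness hypothesis, via (\ref{new2.3}), places each $d^{x}_{\varepsilon}$ inside $\mathcal{D}(U(x), \bar d, \bar\delta)$ with constants independent of $\varepsilon$, and axiom A3 yields the uniform convergence $d^{x}_{\varepsilon} \to \bar d^{x}$ on compact subsets.

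At each fixed $x$, Theorem \ref{new3.1} then applies and delivers $\Gamma$-convergence of the length functionals associated with $d^{x}_{\varepsilon_{n}}$ to that of $\bar d^{x}$, which in view of Proposition \ref{new2.4} and the Radon--Nikodym property admits the integral representation
\[
l(x, c) \,=\, \int_{0}^{1} \phi_{\bar d^{x}}\bigl(c(t), \dot c(t)\bigr)\, \mbox{d}t.
\]
This identifies the natural candidate for the limit functional $l$ on $\mathcal{L}(X, \bar d, \bar\delta)$.

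The main obstacle is that A3L requires $\Gamma$-convergence of $l_\varepsilon$ viewed as a functional on the whole product $X \times \mathcal{C}([0,1], X)$, so the base point must be allowed to vary: one needs to treat sequences $(x_n, c_n) \to (x, c)$ together with $\varepsilon_n \to 0$, not only curves at a fixed base point. I would overcome this by exploiting the uniform-on-compact-sets clauses built into A3 and into the temperedness inequality (\ref{new2.3}), which produce estimates on $d^{x}_{\varepsilon}$ whose constants depend only on a compact set containing all the $x_n$. Feeding these uniform estimates into the proof scheme of Theorem \ref{new3.1} allows both the $\liminf$ inequality and the construction of a recovery sequence to be performed jointly in $(x_n, c_n, \varepsilon_n)$, completing the verification of A3L and hence the corollary.
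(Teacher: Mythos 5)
Your proposal follows essentially the same route as the paper: identify $l^{x}_{\varepsilon}$ with the length functional of the rescaled pulled-back distance $\frac{1}{\varepsilon}\bar{d}(\bar{\delta}^{x}_{\varepsilon}\cdot,\bar{\delta}^{x}_{\varepsilon}\cdot)$, note that temperedness places it in $\mathcal{D}(\Omega,\bar{d},\bar{\delta})$ and that A3 gives convergence to $\bar{d}^{x}$, then invoke Theorem \ref{new3.1}. Your closing discussion of the varying base point is in fact more careful than the paper's own (very terse) proof, which fixes $x$ and does not address that issue at all.
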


\begin{proof}
Indeed, from the hypothesis we deduce that $\displaystyle
\bar{\delta}^{x}_{\varepsilon} \bar{d} \, \in \, 
 \mathcal{D}(\Omega, \bar{d}, \bar{\delta})$. For any sequence 
 $\displaystyle \varepsilon_{n} \rightarrow 0$ we thus obtain a sequence of 
 distances $\displaystyle d_{n} \, = \, \bar{\delta}^{x}_{\varepsilon_{n}} 
 \bar{d}$ converging to $\bar{d}^{x}$. We apply now theorem \ref{new3.1} 
 and we get the result. 
\end{proof}

\end{document}